\newenvironment{example}{\noindent\textbf{Example:}\hspace{1em}}{}
\newcommand{\N}{\mathbb{N}}
\newcommand{\Q}{\mathbb{Q}}
\newcommand{\C}{\mathbb{C}}
\newcommand{\PP}{\mathbb{P}}
\newcommand{\Z}{\mathbb{Z}}
\newcommand{\one}{\mathds{1}}
\theoremstyle{plain}
\newtheorem{thm}{Theorem}[section]
\newtheorem{prop}[thm]{Proposition}
\newtheorem{lemma}[thm]{Lemma}
\newtheorem{remark}[thm]{Remark}
\newtheorem{notation}[thm]{Notation}
\newtheorem{cor}[thm]{Corollary}
\newtheorem{problem}[thm]{Problem}
\theoremstyle{definition}
\newtheorem{dfn}{Definition}[thm]
\begin{document}

\title{Realization of graded monomial ideal rings modulo torsion}
\author{Tseleung So}
\address{Department of Mathematics and Statistics, Univeristy of Regina, Regina, SK S4S 0A2, Canada}
\email{Tse.Leung.So@uregina.ca}
\author{Donald Stanley}
\address{Department of Mathematics and Statistics, Univeristy of Regina, Regina, SK S4S 0A2, Canada}
\email{Donald.Stanley@uregina.ca}

\subjclass[2010]{Primary 55N10 Secondary 13F55, 55P99, 55T20.} 
\keywords{cohomology realization problem, polyhedral product}

\maketitle

\begin{abstract}
Let $A$ be the quotient of a graded polynomial ring $\Z[x_1,\ldots,x_m]\otimes\Lambda[y_1,\ldots,y_n]$ by an ideal generated by monomials with leading coefficients 1. Then we constructed a space~$X_A$ such that $A$ is isomorphic to $H^*(X_A)$ modulo torsion elements.
\end{abstract}

\section{Introduction}

A classical problem in algebraic topology asks: which commutative graded $R$-algebra $A$ are isomorphic to $H^*(X_A;R)$ for some space $X_A$? The space $X_A$, if it exists, is called a realization of $A$. According to Aguad\'e \cite{aguade} the problem goes back to at least Hopf, and was later explicitly stated by Steenrod \cite{steenrod}.
To solve the problem in general is probably too ambitious, but many special cases have been proven.

One of Quillen's motivations for his seminal work on rational homotopy theory \cite{quillen} was to solve this problem over $\mathbb{Q}$. He showed that all simply connected graded $\mathbb{Q}$-algebras have a realization. The problem of which polynomial algebras over $\mathbb{Z}$ have realizations has a long history and a complete solution was given by Anderson and Grodal \cite{AG}, (see also \cite{notbohm}).  More recently Trevisan~\cite{trevisan} and later Bahri-Bendersky-Cohen-Gitler~\cite{BBCG2} constructed realizations of $\Z[x_1,\ldots,x_m]/I$, where $|x_i|=2$ and $I$ is an ideal generated by monomials with leading coefficient 1.

We want to consider a related problem that lies between the solved realization problem over $\mathbb{Q}$ and the very difficult realization problem over 
$\mathbb{Z}$. We do this by modding out torsion. 

\begin{problem}
Which commutative graded $R$-algebra $A$ are isomorphic to $H^*(X_A;R)/\text{torsion}$ for some space $X_A$?
\end{problem}

Such an $X_A$ is called a realization modulo torsion of $A$. For example, a polynomial ring~$\Z[x]$ has a realization modulo torsion given by Eilenberg-MacLane space $K(\Z,|x|)$ if $|x|$ is even, while $\Z[x]$ has a realization (before modding out torsion) if and only if $|x|=2$ or $4$~\cite{steenrod}. Here we ask: do all finite type connected commutative graded $\Z$-algebras have a realization modulo torsion?

Notice that modding out by torsion is different from taking rational coefficients. For example, both $H^*(\Omega S^{2n+1};\Q)$ and $H^*(K(\Z, 2n);\Q)$ are $\Q[x]$ generated by $x$ of degree $2n$. But~$H^*(K(\Z,2n))/\text{torsion}$ is $\Z[x]$, while $H^*(\Omega S^{2n+1})\cong\Gamma[x]$ is free as a $\Z$-module and is the divided polynomial algebra generated by $x$.

In this paper, we construct realizations modulo torsion of graded monomial ideal 
rings~$A$ which are tensors of polynomial algebras and exterior algebras modulo monomial ideals. More precisely, let $P=\Z[x_1,\ldots,x_m]\otimes\Lambda[y_1,\ldots,y_n]$ be a graded polynomial ring wher $x_i$'s have arbitrary positive even degrees and $y_j$'s have arbitrary positive odd degrees, and let $I=(M_1,\ldots,M_r)$ be an ideal generated by $r$ minimal monomials
\[
M_j=x_1^{a_{1j}}x_2^{a_{2j}}\cdots x_m^{a_{mj}}\otimes y_1^{b_{1j}}\cdots y_n^{b_{nj}},\quad 1\leq j\leq r,
\]
where indices $a_{ij}$ are non-negative integers and $b_{ij}$ are either 0 or 1. Then the quotient algebra $A=P/I$ is called a \emph{graded monomial ideal ring}.

\begin{thm}[Main Theorem]\label{thm_main thm}
Let $A$ be a graded monomial ideal ring. Then there exists a space $X_A$ such that $H^*(X_A)/T$ is isomorphic to $A$, where $T$ is the ideal consisting of torsion elements in $H^*(X_A)$. Moreover, there is a ring morphism $A\to H^*(X_A)$ that is right inverse to the quotient map $H^*(X_A)\to H^*(X_A)/T\cong A$.
\end{thm}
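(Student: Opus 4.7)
The plan is to construct $X_A$ as a homotopy pullback that extends the Trevisan and Bahri--Bendersky--Cohen--Gitler polyhedral-product realizations to the setting of mixed (even and odd) degrees and arbitrary monomial ideals.

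First, realize the ambient ring $P$ modulo torsion. The product
\[
Y_0 = \prod_{i=1}^m K(\Z, |x_i|) \times \prod_{j=1}^n S^{|y_j|}
\]
satisfies $H^*(Y_0)/T \cong P$, since $H^*(K(\Z, 2d))/T = \Z[x]$ (with generator the fundamental class, for any $d \geq 1$) and $H^*(S^{2e+1}) = \Lambda[y]$. The external fundamental classes define a ring map $P \to H^*(Y_0)$ splitting the quotient.

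Next, encode the relations. Set $\bar a_i = \max_{\ell} a_{i\ell}$ and work in the expanded ring
\[
\tilde P = \Z[x_{i,k}]_{1 \le i \le m,\, 1 \le k \le \bar a_i} \otimes \Lambda[y_1, \ldots, y_n]
\]
with $|x_{i,k}| = |x_i|$. The surjection $\pi \colon \tilde P \twoheadrightarrow P$ sending $x_{i,k} \mapsto x_i$ has kernel generated by the regular sequence $\{x_{i,k} - x_{i,1} : k > 1\}$ of linear forms, and lifts each $M_\ell$ to the \emph{square-free} monomial
\[
\tilde M_\ell = \Big(\prod_{i,\, k \le a_{i\ell}} x_{i,k}\Big)\Big(\prod_{j \,:\, b_{j\ell}=1} y_j\Big).
\]
These square-free monomials are the minimal non-faces of a simplicial complex $K$ on the vertex set $\{(i,k)\} \sqcup \{j'\}$. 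The polyhedral product
\[
\tilde Z = (\underline X, \underline *)^K \subseteq \tilde Y_0 := \prod_{i,k} K(\Z, |x_i|) \times \prod_j S^{|y_j|},
\]
with $X_{(i,k)} = K(\Z, |x_i|)$ and $X_{j'} = S^{|y_j|}$, should realize $\tilde P/\tilde I$ modulo torsion once the BBCG-style polyhedral-product computation is extended from the case $|x_i|=2$ to arbitrary positive even degree. Define $X_A$ to be the homotopy pullback of $\tilde Z \hookrightarrow \tilde Y_0$ along the diagonal $\Delta \colon Y_0 \to \tilde Y_0$, so that $\Delta^*$ realizes $\pi$ on cohomology modulo torsion.

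The cohomology of $X_A$ is then computed via Eilenberg--Moore or, more concretely, via iterated Mayer--Vietoris. Modulo torsion one expects
\[
H^*(X_A)/T \cong P \otimes_{\tilde P} (\tilde P/\tilde I) = A,
\]
which is consistent with the kernel of $\pi$ being a regular sequence in $\tilde P$. The splitting $A \to H^*(X_A)$ is obtained by pairing the splittings $P \to H^*(Y_0)$ and $\tilde P/\tilde I \to H^*(\tilde Z)$ (each coming from external fundamental classes) through the pullback square. The main obstacle is controlling integral cohomology when $|x_i| > 2$: the Eilenberg--Moore spectral sequence for the pullback can have nontrivial higher Tor contributions, and the polyhedral-product calculation introduces torsion inherited from $K(\Z, |x_i|)$. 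One must verify that all of this extra material is torsion in $H^*(X_A)$ and so invisible after quotienting by $T$. Constructing the splitting, rather than merely an abstract isomorphism, additionally requires choosing integral lifts of the generators so that the multiplicative relations of $A$ hold on the nose, not just modulo $T$.
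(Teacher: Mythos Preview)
Your plan is the paper's: polarize to a square-free ideal, realize the polarization by a polyhedral product $\tilde Z$ on Eilenberg--MacLane spaces and odd spheres, and recover $X_A$ as the homotopy pullback along the diagonal --- equivalently, as the fiber of the difference map $\delta\colon \tilde Z \to \prod_{k>1} K(\Z,|x_i|)$, which is how the paper phrases it. The paper likewise runs the Eilenberg--Moore spectral sequence of that fibration.

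There are two places where your outline stops short of a proof and the paper supplies a specific argument. First, for the higher $\text{Tor}$ to vanish (so that the torsion-free $E_2$-page is concentrated in column $0$) you need $\{x_{i,k}-x_{i,1}\}$ to be a regular sequence \emph{on $\tilde P/\tilde I$}, not merely on $\tilde P$; regularity on $\tilde P$ is trivial and does not control $\text{Tor}_{\tilde P}^{-p}(P,\tilde P/\tilde I)$, which can be nonzero and torsion-free when the sequence fails to be regular on the quotient. That $\tilde P/\tilde I$ is in fact free over $\Z[x_{i,k}-x_{i,1}]$ is the polarization lemma (Fr\"oberg), and the paper proves it over $\Z$ by an explicit induction on the successive quotients.

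Second, and more seriously, knowing that $(E^{-p,*}_\infty)_f=0$ for $p>0$ and $(E^{0,*}_\infty)_f\cong A$ yields only a \emph{module} isomorphism $H^*_f(X_A)\cong A$: the inclusion $\mathscr{F}^0\hookrightarrow H^*(X_A)$ can acquire a torsion cokernel after passing to torsion-free quotients (think $2\Z\subset\Z$), so the edge map need not be surjective onto $H^*_f(X_A)$, and the ring structure is not yet identified. The paper resolves this by building an auxiliary space $Z_A$ in which each $K(\Z,2c_i)$ is replaced by $(\C\PP^\infty)^{c_i}$, so that $H^*(Z_A)$ is torsion-free and computed on the nose, together with a comparison map $g_A\colon Z_A\to X_A$ induced by the class $z_1\cdots z_{c_i}\in H^{2c_i}((\C\PP^\infty)^{c_i})$. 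Tracking generators through $g_A^*$ shows the edge map hits a direct summand of $H^*_f(Z_A)$, forcing it (by a rank count) to be an isomorphism onto $H^*_f(X_A)$; this gives $\phi^*_f$ surjective with kernel exactly $(x_{i,k}-x_{i,1})$ and hence the ring isomorphism. The free splitting then descends from the explicit splitting on $\tilde Z$ through $\phi^*$, using that the kernel generators lie in the image of $\delta^*$. Your ``pairing the splittings through the pullback square'' does not supply either of these steps.
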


If all of the even degree generators are in degree $2$, then we do not need to mod out by torsion and so we get a generalization (Proposition~\ref{cor_deg 2 and odd deg}) of the results of Bahri-Bendersky-Cohen-Gitler~\cite[Theorem 2.2]{BBCG2} and Trevisan~\cite[Theorem 3.6]{trevisan}.

The structure of the paper is as follows. Section 2 contains preliminaries, algebraic tools and lemmas that are used in later sections. In Section 3 we recall the definition of polyhedral products and modify a result of Bahri-Bendersky-Cohen-Gitler~\cite{BBCG} to compute~$H^*((\underline{X},\ast)^{K})/T$. In Sections 4 and 5 we prove Theorem~\ref{thm_main thm} in several steps. First, we prove it in the special case where the ideal $I$ is square-free. Then for the general case, we construct a fibration sequence inspired by algebraic polarization method and show that the fiber $X_A$ is a realization modulo torsion of $A$. In Section 6 we illustrate how to construct~$X_A$ for an easy example of~$A$.

\section{Preliminaries}

\subsection{Quotients of algebras by torsion elements}\
It is natural to study an algebra $A$ by factoring out the torsion elements since the quotient algebra is torsion-free and has a simpler structure. Driven by this, we start investigating the quotients of cohomology rings of spaces by their torsion elements. Since we cannot find related references in the literature, here we fix the notation and develop lemmas for our purpose.

A graded module $A=\{A_i\}_{i\in S}$ is a family of indexed modules $A_i$. Since we are interested in cochain complexes and cohomology rings of connected, finite type CW-complexes, we assume $A$ to be a connected, finite type graded module with non-positive degrees. That is~$S=\N_{\leq0}$, $A_0=\Z$ and each component $A_i$ is finitely generated. We follow the convention and denote $A_{i}$ by $A^{-i}$.

\begin{remark}
Equivalently we can define a graded module to be a module with a grading structure, that is the direct sum $A=\bigoplus_{i\in S}A_i$ of a family of indexed modules. This definition is slightly different from the definition above. We will use both definitions interchangeably.
\end{remark}

An element $x\in A$ is \emph{torsion} if $cx=0$ for some non-zero integer $c$, and is \emph{torsion-free} otherwise. The torsion submodule $A_t$ of $A$ is the graded submodule consisting of torsion elements and the torsion-free quotient module $A_f=A/A_t$ is their quotient. If $B$ is another graded module and $g:A\to B$ is a morphism, then it induces a morphism $g_f:A_f\to B_f$ sending $a+A_t\in A_f$ to $g(a)+B_t\in B_f$. This kind of structure is important in abelian categories and was formalized with Dixon's notion of a torsion theory \cite{dixon}, but in this paper we only use the structure in a naive way. 

\begin{lemma}\label{lemma_free SES}
If $0\to A\overset{g}{\to}B\overset{h}{\to}C\to0$ is a short exact sequence of graded modules, then we have $C_f\cong(B_f/A_f)_f$. Furthermore, if the sequence is split exact, then so is
\[
0\longrightarrow A_f\overset{g_f}{\longrightarrow}B_f\overset{h_f}{\longrightarrow}C_f\longrightarrow0.
\]
\end{lemma}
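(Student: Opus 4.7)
The plan is to treat the two halves separately, as they reflect different behaviours of the functor $(-)_f$ under short exact sequences. The central phenomenon to keep in mind is that $(-)_f$ is in general \emph{not} exact: the sequence $0\to\Z\overset{\cdot 2}{\to}\Z\to\Z/2\to 0$ shows that exactness of $0\to A_f\to B_f\to C_f\to 0$ can fail, which is precisely why the first statement needs the extra $(\,\cdot\,)_f$ wrapped around $B_f/A_f$, and why the second statement requires a splitting hypothesis.

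First I would establish the underlying facts about the induced maps. The map $g_f:A_f\to B_f$ is always injective: if $g(a)\in B_t$ then $c\,g(a)=g(ca)=0$ for some nonzero $c$, and injectivity of $g$ forces $ca=0$, whence $a\in A_t$. Dually, $h_f:B_f\to C_f$ is always surjective, directly from surjectivity of $h$. These two observations let me form the quotient $B_f/g_f(A_f)$, and since $h_f\circ g_f=(h\circ g)_f=0$, the map $h_f$ descends to a surjection $\bar h_f:B_f/A_f\to C_f$.

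Next I would identify $\ker\bar h_f$ with the torsion submodule of $B_f/A_f$. The key chain of equivalences is: a class $[b]\in B_f/A_f$ lies in $\ker\bar h_f$ iff $h(b)\in C_t$, iff $n h(b)=h(nb)=0$ for some nonzero $n$, iff $nb\in g(A)$, iff $n[b]=0$ in $B_f/A_f$. Thus $\ker\bar h_f=(B_f/A_f)_t$, which gives the desired isomorphism $C_f\cong (B_f/A_f)_f$ and completes the first statement.

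For the split case I would use functoriality. A splitting of $0\to A\to B\to C\to 0$ provides $r:B\to A$ and $s:C\to B$ with $rg=\mathrm{id}_A$, $hs=\mathrm{id}_C$, and $gr+sh=\mathrm{id}_B$. Applying $(-)_f$ yields the analogous identities $r_fg_f=\mathrm{id}_{A_f}$, $h_fs_f=\mathrm{id}_{C_f}$, $g_fr_f+s_fh_f=\mathrm{id}_{B_f}$, which force $0\to A_f\to B_f\to C_f\to 0$ to be exact and split. The only point that needs care is ensuring the first statement is stated with enough generality that the second follows cleanly; no obstacle, but I would double-check that under the split hypothesis the computation of $\ker\bar h_f$ collapses so that $B_f/A_f$ itself is already torsion-free, which is exactly what the splitting guarantees via $b=gr(b)+sh(b)$. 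I expect no serious difficulty: the lemma is essentially a bookkeeping exercise in how torsion interacts with exactness, and the only mild subtlety is the failure of $(-)_f$ to be exact without a splitting, which I would flag explicitly.
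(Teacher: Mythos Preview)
Your proof is correct. It differs from the paper's in packaging rather than in substance, but the difference is worth noting. The paper sets up a $3\times 3$ diagram with rows $A_t\to B_t\to B_t/A_t$, $A\to B\to C$, $A_f\to B_f\to B_f/A_f$, diagram-chases to show the right column $0\to B_t/A_t\to C\to B_f/A_f\to 0$ is exact, and then proves that the induced map $v_f:C_f\to(B_f/A_f)_f$ is an isomorphism. You instead go the other direction: you work directly with the surjection $\bar h_f:B_f/A_f\to C_f$ and identify its kernel with the torsion submodule, which gives the isomorphism in one stroke without the auxiliary diagram. Your route is a bit more economical; the paper's route makes the intermediate exact sequence $0\to B_t/A_t\to C\to B_f/A_f\to 0$ visible, which is not needed here but is a pleasant structural fact. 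For the split case the two arguments are equivalent: the paper simply observes $B\cong A\oplus C$ forces $B_f\cong A_f\oplus C_f$, while you spell out the same thing via the splitting identities and functoriality of $(\,\cdot\,)_f$. One small remark: in your chain of iffs the particular integer $n$ may need to be replaced by a multiple when passing between ``$nb\in g(A)$'' and ``$n[b]=0$ in $B_f/A_f$'', since the latter only says $nb-g(a)\in B_t$; this does not affect the conclusion, but you should phrase the equivalence at the level of ``torsion'' rather than a fixed $n$.
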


\begin{proof}
Consider a commutative diagram
\[
\xymatrix{
	&0\ar[d]	&0\ar[d]	&0\ar[d]	&\\
0\ar[r]	&A_t\ar[d]\ar[r]^-{g_t}	&B_t\ar[d]\ar[r]^-{p}	&B_t/A_t\ar[d]^-{u}\ar[r]	&0\\
0\ar[r]	&A\ar[d]\ar[r]^-{g}		&B\ar[d]\ar[r]^-{h}		&C\ar[d]^-{v}\ar[r]			&0\\
0\ar[r]	&A_f\ar[d]\ar[r]^-{g_f}	&B_f\ar[d]\ar[r]^-{q}	&B_f/A_f\ar[d]\ar[r]	&0\\
	&0	&0	&0	&
}
\]
where $g_t$ is the restriction of $g$ to $A_t$, $p$ and $q$ are the quotient maps, and $u$ and $v$ are the induced maps. By construction all rows and columns are exact sequences except for the right column. A diagram chase implies that $u$ is injective and $v$ is surjective. We claim that the column is exact at $C$. Obviously $v\circ u$ is trivial. Take an element $c\in ker(v)$ and its preimage $b\in B$. A diagram chase implies $b=g(a)+b'$ for some $a\in A$ and $b'\in B_t$. So~$c=h(b')=u\circ p(b')$ is in $Im(u)$ and the right column $0\to B_t/A_t\overset{u}{\to}C\overset{v}{\to}B_f/A_f\to0$ is exact.

For the first part of the lemma, we show that $v_f:C_f\to (B_f/A_f)_f$ is an isomorphism. Since $v$ is surjective, so is $v_f$. Take $c'\in ker(v_f)$ and its preimage $\tilde{c}'\in C$. Then $v(\tilde{c}')$ is a torsion element in $B_f/A_f$ and $mv(\tilde{c}')=0$ for some non-zeor integer $m$. So $m\tilde{c}'\in{ker}(v)$. As~$ker(v)=Im(u)$ consists of torsion elements, $m\tilde{c}'$ is a torsion and so is $\tilde{c}'$. Therefore $c'=0$ in $C_f$ and $v_f$ is injective.

Notice that an exact sequence being split is equivalent to $B\cong A\oplus C$. So $B_f\cong A_f\oplus C_f$ and $0\to A_f\overset{g_f}{\to}B_f\overset{h_f}{\to}C_f\to0$ is a split exact sequence.
\end{proof}

A graded algebra $(A,m)$ consists of a graded module $A$ and an associative bilinear multiplication $m=\{m^{i,j}:A^i\otimes A^j\to A^{i+j}\}$ such that $1\in A^0$ is the multiplicative identity. A pair $(M,\mu)$ is a left $A$-module (or right $A$-module) if $M$ is a graded module and $\mu$ is an associative bilinear multiplication $\mu=\{\mu^{i,j}:A^i\otimes M^j\to M^{i+j}\}$ such that $\mu(1\otimes x)=x$ for all $x\in M$ (respectively $\mu=\{\mu^{i,j}:M^i\otimes A^j\to M^{i+j}\}$ such that $\mu(1,x)=x$). We check that modding out torsion and multiplications are compatible.

\begin{lemma}\label{lemma_tensor product free}
If $A$ and $M$ are graded modules (not necessarily of finite type), then there is a unique isomorphism $\theta:(A\otimes M)_f\to A_f\otimes M_f$ of graded modules making the diagram
\[
\xymatrix{
A\otimes M\ar[r]\ar[d]	&A_f\otimes M_f\\
(A\otimes M)_f\ar[ur]_-{\theta}
}
\]
commute, where the vertical and the horizontal maps are quotient maps.
\end{lemma}

\begin{proof}
It suffices to show that $(A^i\otimes M^j)_f\cong A^i_f\otimes M^j_f$ for any positive integers $i$ and $j$. Consider commutative diagram
\[
\xymatrix{
0\ar[r]	&A^i_t\otimes M^j\oplus A^i\otimes M^j_t\ar[r]^-{\imath_1}\ar[d]^-{a}	&A^i\otimes M^j\ar@{=}[d]\ar[r]^-{\pi_1}	&A^i_f\otimes M^j_f\ar[d]^-{b}\ar[r]	&0\\
0\ar[r]	&(A^i\otimes M^j)_t\ar[r]^-{\imath_2}	&A^i\otimes M^j\ar[r]^-{\pi_2}	&(A^i\otimes M^j)_f\ar[r]	&0\\
}
\]
where $a,\imath_1$ and $\imath_2$ are inclusions, $\pi_1$ and $\pi_2$ are quotient maps and $b$ is the induced map. We want to show that $b$ is an isomorphism, which is equivalent to showing that $a$ is an isomorphism. If $A$ and $M$ are of finite type, then $a$ is an isomorphism since $A^i$ and $M^j$ are finitely generated abelian groups. In the general case, $a$ is an isomorphism by~\cite[Theorem 61.5]{fuchs}.
\end{proof}

\begin{cor}\label{cor_torsion free and multiplication compatible}
Let $(A,m)$ be a graded algebra and let $m'_f$ be the composition
\[
m'_f:A_f\otimes A_f\cong(A\otimes A)_f\overset{m_f}{\to}A_f.
\]
Then $(A_f,m'_f)$ is a graded algebra and there is a commutative diagram
\[
\xymatrix{
A\otimes A\ar[r]^-{m}\ar[d]	&A\ar[d]\\
A_f\otimes A_f\ar[r]^-{m'_f}	&A_f
}
\]
where the vertical maps are quotient maps.

Let $(M,\mu)$ be a left (or right) $A$-module and let $\mu'_f$ be the composition
\[
\begin{array}{c c}
\mu'_f:A_f\otimes M_f\cong(A\otimes M)_f\overset{\mu_f}{\to}M_f.
&(\text{respectively }\mu'_f:M_f\otimes A_f\cong(M\otimes A)_f\overset{\mu_f}{\to}M_f)
\end{array}
\]
Then $(M_f,\mu'_f)$ is a left (respectively right) $A_f$-module and there is a commuative diagram
\[
\begin{array}{c c}
\vcenter{
\xymatrix{
A\otimes M\ar[r]^-{\mu}\ar[d]	&M\ar[d]\\
A_f\otimes M_f\ar[r]^-{\mu'_f}	&M_f
}}
&\left(\text{respectively }\vcenter{
\xymatrix{
M\otimes A\ar[r]^-{\mu}\ar[d]	&M\ar[d]\\
M_f\otimes A_f\ar[r]^-{\mu'_f}	&M_f
}}
\right)
\end{array}
\]
where the vertical maps are quotient maps.
\end{cor}

A \emph{cochain complex} $(A,d)$ consists of a graded module $A$ and a differential
\[
d=\{d^i:A^i\to A^{i+1}\}
\]
such that $d\circ d=0$. Let $d_f=\{d^i_f:A^i_f\to A^{i+1}_f\}$ be the induced differential on $A_f$. Then~$(A_f,d_f)$ forms a cochain complex and its cohomology $H^*(A_f,d_f)=\{H^i(A_f,d_f)\}_{i\geq0}$ is a graded module. 

A \emph{differential graded algebra} $(A,m,d)$ is a cochain complex $(A,d)$ such that $(A,m)$ is a graded algebra and $d$ and $m$ satisfy the Leibniz rule. Let $d_t$ be the restriction of $d$ to $A_t$. Then $(A_t,d_t)$ is a differential ideal and $(A_f,d_f)$ is a differential graded algebra, so $H^*(A_f,d_f)$ is a graded algebra.

A left (respectively right) \emph{dg-algebra module} $(M,\mu,\delta)$ over $(A,m,d)$ if $(M,\mu)$ is a left (respectively right) $(A,m)$-module, $(M,\delta)$ is a cochain complex and $\delta$ and $\mu$ satisfy the Leibniz rule. Then $H^*(M_f,\delta_f)$ is a left (respectively right) $H^*(A_f)$-module.

Even if $(A_f,d_f)$ is torsion-free, $H^*(A_f,d_f)$ is not necessarily torsion-free. Denote $(H^*(A,d))_f$ by $H^*_f(A,d)$. The following lemma compares $H^*_f(A,d)$ and $H^*_f(A_f,d_f)$.

\begin{lemma}\label{lemma_free cohmlgy}
Let $(A,d)$ be a cochain complex. Then there is a monomorphism of modules
\[
\psi:H^*_f(A,d)\longrightarrow H^*_f(A_f,d_f).
\]
If $H^{i+1}(A_t,d_t)=0$, then $\psi:H^i_f(A,d)\to H^i_f(A_f,d_f)$ is an isomorphism. Moreover, suppose~$(A,m,d)$ is a differential graded algebra. Then $\psi$ is a morphism of algebras.
\end{lemma}

\begin{proof}
Assume $(A,d)$ is a cochain complex. Let $\imath:(A_t,d_t)\to(A,d)$ be the inclusion and let $\pi:(A,d)\to(A_f,d_f)$ be the quotient map. Then the short exact sequence of cochain complexes $
0\to(A_t,d_t)\overset{\imath}{\to}(A,d)\overset{\pi}{\to}(A_f,d_f)\to0$ induces a long exact sequence
\[
\cdots\to H^{i-1}(A_f, d_f)\to H^i(A_t, d_t)\overset{\imath^*}{\to}H^i(A,d)\overset{\pi^*}{\to}H^i(A_f,d_f)\to H^{i+1}(A_t, d_t)\to\cdots
\]
Take $\psi:H^*_f(A,d)\to H^*_f(A_f,d_f)$ to be the morphism induced by $\pi^*:H^*(A,d)\to H^*(A_f,d_f)$. We show that it has the asserted properties.

To show the injectivity of $\psi$, take an equivalence class $[a]\in H^*_f(A,d)$ such that $\psi[a]=0$. Represent it by a cocycle class $a\in H^i(A,d)$. Then $\pi^*(a)$ is torsion and $\pi^*(ca)=0$ for some non-zero number $c$. By exactness $ca\in Im(\imath^*)$. Since $H^i(A_t,d_t)$ is torsion, so is $Im(\imath^*)$ and~$ca$ is a torsion. Therefore $a\in H^i(A,d)$ is a torsion. By definition, $[a]\in H^i_f(A,d)$ is zero. So $\psi$ is injective.

Suppose $A^{i+1}$ has no torsion elements. Then $A^{i+1}_t=0$ and $H^{i+1}(A_t,d_t)=0$. So $\pi^*$ is surjective. By definition we have commutative diagram
\[
\xymatrix{
H^i(A,d)\ar[r]^-{\pi^*}\ar[d]	&H^i(A_f,d_f)\ar[d]\\
H^i_f(A,d)\ar[r]^-{\psi}		&H^i_f(A_f,d_f),
}\]
where vertical arrows are quotient maps and are surjective. So $\psi:H^i_f(A,d)\to H^i_f(A_f,d_f)$ is surjective and hence isomorphic.

If $A$ is a differential graded algebra, then $\pi^*:H^*(A,d)\to H^*(A_f,d_f)$ is a morphism of graded algebras. By Corollary~\ref{cor_torsion free and multiplication compatible} the induced morphism $\psi$ is multiplicative.
\end{proof}

\begin{example}
The surjectivity of $\psi:H^i_f(A,d)\to H^i_f(A_f,d_f)$ may fail if $A^{i+1}$ contains torsion elements. Let $(A,d)$ be a cochain complex where
\[
A^i=\begin{cases}
\Z		&i=0\\
\Z/2\Z	&i=1\\
0		&\text{otherwise},
\end{cases}
\]
and $d^i$ are trivial for all $i$ except for $d^0:\Z\to\Z/2\Z$ being the quotient map. Then $H^0(A)$ and $H^0(A_f)$ are $\Z$ while $\psi:H^0(A)\to H^0_f(A)$ is multiplication $2:\Z\to\Z$.
\end{example}

\subsection{Eilenberg-Moore spectral sequence}\label{section_EMSS}
Given a differential graded algebra $(A,d)$ and a right $A$-module $(M,d_M)$, first we define the bar bicomplex $\text{B}^{*,*}(M,A)$ as follows. For any positive integer $i$, let $\text{B}^{-i}(M,A)=M\otimes(\bar{A})^{\otimes i}$ where $\bar{A}=\{A^n\}_{n>0}$. Denote an element in~$\text{B}^{-i}(M,A)$ by $x[a_1|\cdots|a_i]$ for $x\in M$ and $a_i\in\bar{A}$. Let $\text{B}^{-i,j}(M,A)$ be the submodule of~$\text{B}^{-i}(M,A)$ consisting elements $x[a_1|\cdots|a_i]$ such that $|x|+\sum^i_{k=1}|a_k|=j$. The internal and external differentials
\[
\begin{array}{c c c}
d_I:\text{B}^{-i,j}(M,A)\to\text{B}^{-i,j+1}(M,A)
&\text{and}
&d_E:\text{B}^{-i,j}(M,A)\to\text{B}^{-i+1,j}(M,A)
\end{array}
\]
are given by
\begin{eqnarray*}
d_I(x[a_1|\cdots|a_i])
&=&(d_Mx)[a_1|\cdots|a_i]+\sum^{i}_{j=1}(-1)^{\epsilon_{j-1}}x[a_1|\cdots|a_{j-1}|d_Aa_j|a_{j+1}|\cdots|a_i]\\
d_E(x[a_1|\cdots|a_i])
&=&(-1)^{|x|}(xa_1)[a_2|\cdots|a_i]
+\sum^{i-1}_{j=1}(-1)^{\epsilon_{j}}x[a_1|\cdots|a_{j-1}|a_{j}\cdot a_{j+1}|\cdots|a_i],
\end{eqnarray*}
where $\epsilon_k=k+|x|+\sum^k_{j=1}|a_j|$. Then we define the bar construction $(\mathcal{B}(M,A),d_{\mathcal{B}})$ to be a graded module where $\mathcal{B}(M,A)^n=\displaystyle\bigoplus_{-i+j=n}\text{B}^{-i,j}(M,A)$ and $d_{\mathcal{B}}=\displaystyle\bigoplus_{-i+j=n}(d_I+d_E)$ for $n\geq 0$.

Take the filtration $\mathscr{F}^{-p}=\bigoplus_{0\leq i\leq p}\text{B}^{-i}(M,A)$. The associated spectral sequence $\{E^{*,*}_r\}^{\infty}_{r=0}$ is the Eilenberg-Moore spectral sequence converging to $H^*(\mathcal{B}(M,A))$~(see \cite[Remark 2.3]{FHT} and~\cite[Corollary 7.9]{mccleary}).


\begin{lemma}\label{lemma_free E2_alg}
Let $A$ be a simply connected differential graded algebra and $M$ be a right~$A$-module such that $A$ and $M$ are free as $\Z$-modules. Then there is a monomorphism of modules
\[
\psi:(E^{-p,q}_2)_f\longrightarrow\left(\text{Tor}^{-p,q}_{H_f(A)}(H_f(M),\Z)\right)_f.
\]
which is an isomorphism for $p=0$. Moreover, if $H(A)$ and $H(M)$ are free modules, then~$E^{-p,q}_2\cong\text{Tor}^{-p,q}_{H(A)}(H(M),\Z)$.
\end{lemma}

\begin{proof}
The $E_0$-page is given by
\[
E^{-p,*}_0=\mathscr{F}^{-p}/\mathscr{F}^{-p+1}=M\otimes(\bar{A}^{\otimes p})
\]
and $d_0=d_I$. By K\"{u}nneth Theorem the $E_1$-page is given by
\begin{eqnarray*}
E^{-p,*}_1
&\cong&H(M)\otimes(\tilde{H}(A)^{\otimes p})\oplus T\\
&\cong&\text{B}^{-p}(H(M),H(A))\oplus T
\end{eqnarray*}
where $T$ is a torsion term, and $d_1$ is induced by $d_E$. Denote $H(M)$ by $M'$ and $H(A)$ by $A'$ for short. By Lemma~\ref{lemma_tensor product free}, there is an isomorphism of graded modules
\[
\theta:(E_1^{-p,*})_f\cong(\text{B}^{-p}(M',A'))_f\to\text{B}^{-p}(M'_f,A'_f)
\]
such that
\[
\xymatrix{
\text{B}^{-p}(M',A')\ar[d]\ar[dr]	&\\
(\text{B}^{-p}(M',A'))_f\ar[r]^-{\theta}	&\text{B}^{-p}(M'_f,A'_f)
}
\]
where the downward maps are quotient maps. Let $d'$ be the external differential of $\text{B}^*(M'_f,A'_f)$. Then $\theta:((\text{B}^{-p}(M',A'))_f,(d_1)_f)\to(\text{B}^*(M'_f,A'_f),d')$ is an isomorphism of cochain complexes. By Lemma~\ref{lemma_free cohmlgy} there is a monomorphism of graded modules
\[
\psi:(E^{-p,q}_2)_f=H^{-p}_f(E_1^{*,q},d_1)
\to H^{-p}_f((\text{B}^{*,q}(M',A'))_f,(d_1)_f)\cong H^{-p}_f(\text{B}^{*,q}(M'_f,A'_f),d').
\]

Notice that $\text{B}^*(M'_f,A'_f)\cong M'_f\otimes_{A'_f}\text{B}^*(A'_f,A'_f)$ and $d'=\one\otimes_{A'_f}d''$, where $d''$ is the external differential of $\text{B}^{*}(A'_f,A'_f)$. Since~\cite[Proposition 7.8]{mccleary}
\[
\cdots\longrightarrow \text{B}^{-1}(A'_f,A'_f)\overset{d''}{\longrightarrow}\text{B}^{0}(A'_f,A'_f)\overset{\epsilon}{\longrightarrow}\Z\longrightarrow0
\]
is a projective resolution of $\Z$ over $A'_f$-modules where $\epsilon:\text{B}^{0}(A'_f,A'_f)\cong A'_f\to\Z$ is the augmentation, the monomorphism becomes
\[
\psi:(E^{-p,q}_2)_f\longrightarrow\left(\text{Tor}^{-p,q}_{A'_f}(M'_f,\Z)\right)_f.
\]
Since $\text{B}^{1}(M',A')=0$, $\psi$ is isomorphic for $p=0$ by Lemma~\ref{lemma_free cohmlgy}.

Suppose $H(A)$ and $H(M)$ are free $\Z$-modules. By the K\"{u}nneth Theorem, we have
\[
E^{*,*}_1\cong\text{B}^{*,*}(H(M),H(A))
\]
and $d_1$ is the external differential. So $E^{-p,q}_2\cong\text{Tor}^{-p,q}_{H(A)}(H(M),\Z)$.
\end{proof}

Let $F\to E\overset{\pi}{\to}X$ be a fibration sequence where all spaces are connected, finite type CW-complexes, and $X$ is simply connected. In~\cite[Theorem III]{FHT} there is a quasi-isomorphism
\[
\Theta:\Omega(C^{\pi}_*(E),C_*(X))\to CN_*(F)
\]
of dg-algebra modules, which is natural in $\pi$. Here $\Omega(-,-)$ is the cobar construction, $C^{\pi}_*(E)$ is a non-negative chain complex, $C_*(X)$ is a simply connected chain complex, $CN_*(F)$ is a chain complex and $C^{\pi}_*(E)$, $C_*(X)$ and $CN_*(F)$ are quasi-isomorphic to the singular chain complexes of $E,X,F$ respectively.

Denote the dual of a (co)chain complex $C$ by $C^{\vee}=\text{Hom}(C,\Z)$. Since $B$ is simply connected, $H^1(X)=0$ and $H^2(X)$ is free. By~\cite[Propositions 4.2 and 4.6]{FHT} there are finite type graded free modules $V=\{V^{i}\}_{i\geq2}$ and $W=\{W^j\}_{j\geq0}$, a quasi-isomorphism of dg-algebras
\[
\phi:T(V)\to(C_*(X))^{\vee}
\]
and a quasi-isomorphism of dg-algebra modules
\[
\varphi:T(V)\otimes W\to(C^{\pi}_*(E))^{\vee},
\]
where $T(V)$ is the tensor algebra on $V$. Write $\tilde{X}=T(V)$ and $\tilde{E}=T(V)\otimes W$ for short. Then the compositions
\[
\begin{array}{c c c}
C_*(X)\overset{\text{incl.}}{\to}(C^*(X))^{\vee}\overset{\phi^{\vee}}{\to}\tilde{X}^{\vee}
&\text{and}
&C^{\pi}_*(E)\overset{\text{incl.}}{\to}(C^*(E))^{\vee}\overset{\varphi^{\vee}}{\to}\tilde{E}^{\vee}
\end{array}
\]
are quasi-isomorphisms of dg-coalgebras and of dg-coalgebra modules. Since $C_*(X)$ and~$\tilde{X}^{\vee}$ are simply connected free chain complexes and $C^{\pi}_*(E)$ and $\tilde{E}^{\vee}$ are non-negative chain complexes, we have a zig-zag of quasi-isomorphisms
\[
\Omega(\tilde{E}^{\vee},\tilde{X}^{\vee})\overset{\simeq}{\longleftarrow}\Omega(C^{\pi}_*(E),C_*(X))\overset{\Theta}{\longrightarrow}CN_*(F).
\]
Since $\tilde{E}$ and $\tilde{X}$ are of finite type, dualize the zig-zag and take cohomology to get an isomorphism
\[
H^*(\mathcal{B}(\tilde{E}, \tilde{X}))\overset{\cong}{\longrightarrow}H^*(F).
\]
The Eilenberg-Moore spectral sequence $\{E^{*,*}_r\}^{\infty}_{r=0}$ on $F\to E\overset{\pi}{\to}X$ is the Eilenberg-Moore spectral sequence given by $A=\tilde{X}$ and $M=\tilde{E}$. Note that this definition depends on the choice of the pair $(\tilde{X},\tilde{E},\phi,\varphi)$. Any two choices may give spectral sequences with different~$E_0$-pages, but their $E_r$-pages are isomorphic for $r\geq 1$.


\begin{lemma}\label{lemma_free E2}
Let $F{\to}E\overset{\pi}{\to}X$ be a fibration sequence such that all spaces are finite type spaces and $X$ is simply connected, and let $\{E^{-p,q}_2\}$ be the $E_2$-page of Eilenberg-Moore spectral sequence on this fibration. Then there is a monomorphism $\psi:(E^{-p,q}_2)_f\to\left(\text{Tor}^{-p,q}_{H^*_f(X)}(H^*_f(E),\Z)\right)_f$ as modules such that $\psi$ is an isomorphism for $p=0$.
\end{lemma}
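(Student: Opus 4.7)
The plan is to recognize this lemma as a direct topological specialization of Lemma~\ref{lemma_free E2_alg}. Following the standard construction of the Eilenberg-Moore spectral sequence via the bar resolution, I would take the differential graded algebra $A = C^*(B)$, equip $M = \Z$ with the trivial right $C^*(B)$-module structure coming from the augmentation $C^*(B) \to \Z$, and equip $N = C^*(E)$ with the left $C^*(B)$-module structure induced by the pullback $p^*\colon C^*(B) \to C^*(E)$ along the projection. The total complex $\mathrm{Tot}(\mathrm{B}(M,A,N))$ then computes $H^*(F)$, and the filtration by bar length is precisely the one whose resulting spectral sequence has $E_2$-page $\mathrm{Tor}^{-p}_{H^*(B)}(\Z, H^*(E))$, as recalled at the start of Section 2.2.

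With these identifications, $C^*(B)$ is simply connected as a DGA (since $B$ is a simply connected space), so Lemma~\ref{lemma_free E2_alg} applies and produces a module monomorphism
\[
\pi'\colon (E_2^{-p,q})_f \longrightarrow \left(\mathrm{Tor}^{-p,q}_{H(A)_f}(H(M)_f, H(N)_f)\right)_f
\]
that is an isomorphism for $p=0$. Substituting $H(A) = H^*(B)$, $H(N) = H^*(E)$, and $H(M) = \Z$ (which is already torsion-free, so $H(M)_f = \Z$), the target reduces to $\left(\mathrm{Tor}^{-p,q}_{H^*_f(B)}(\Z, H^*_f(E))\right)_f$, which is exactly the group appearing in the statement. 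This immediately gives both the claimed monomorphism and the isomorphism at $p=0$.

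The step I expect to be the main obstacle is verifying that the bar-length spectral sequence from Lemma~\ref{lemma_free E2_alg} coincides, from the $E_2$-page onward, with the geometric Eilenberg-Moore spectral sequence of the fibration as used in the paper. This identification is standard (see for instance McCleary, Chapter 7), but to use it honestly one must check that the $E_1$-page produced by the internal differential $d_I$ agrees with the expected $H^*(E) \otimes \overline{H^*(B)}^{\otimes p} \otimes \Z$ up to a torsion summand coming from the K\"unneth theorem, and that the induced $d_1$ is the external bar differential. Granting that dictionary, the conclusion is essentially a formal translation of Lemma~\ref{lemma_free E2_alg} from algebra to topology.
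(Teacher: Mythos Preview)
Your approach is the same as the paper's: reduce directly to Lemma~\ref{lemma_free E2_alg} with $A=C^*(B)$ and module inputs $C^*(E)$ and $\Z$. For the identification you flag as the main obstacle, the paper first replaces $C^*(E)$ and $C^*(B)$ by finite type models via \cite[Proposition~2.8]{FHT} and then invokes the dual of \cite[Theorem~II]{FHT} for the quasi-isomorphism $C^*(F)\simeq\mathrm{Tot}(\mathrm{B}(C^*(E),C^*(B),\Z))$; note also that in the paper's convention for Lemma~\ref{lemma_free E2_alg} the slot $M$ is a left $A$-module and $N$ a right $A$-module, so their assignment is $M=C^*(E)$, $N=\Z$, opposite to yours.
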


\begin{proof}
Since $H(\tilde{E})\cong H^*(E)$ and $H(\tilde{X})\cong H^*(X)$, Lemma~\ref{lemma_free E2_alg} implies that there is a monomorphism $\psi:(E^{-p,q}_2)_f\to(\text{Tor}^{-p,q}_{H^*_f(X)}(H^*_f(E),\Z))_f$ such that $\psi$ is an isomorphism at~$p=0$.
\end{proof}

Recall that the $E_0$-page is given by $E^{p,*}_0=\mathscr{F}^{-p}/\mathscr{F}^{-p+1}\cong\tilde{E}\otimes(\overline{\tilde{X}})^{\otimes p}$. In particular, if~$p=0$, then $E^{0,*}_0\cong\tilde{E}$. On the other hand, $\{E^{*,*}_r\}^{\infty}_{r=0}$ is a second quadrant spectral sequence. So $E^{0,*}_r$ is the kernel of the differential map and $E^{0,*}_{r+1}$ is a quotient group of $E^{0,*}_r$. For $r\in\N\cup\{\infty\}$, define the edge homomorphism $e_r$ to be the composition
\[
e_r:H(E)\cong H(\tilde{E})\cong E^{0,*}_1\to E^{0,*}_r
\]
where the unnamed arrow is the quotient map. The following lemma tells how the edge homomorphisms relate the $E_r$-page to $H^*(E)$ and $H^*(F)$.

\begin{lemma}\label{lemma_edge homomorphism}
Under the hypotheses of Lemma~\ref{lemma_free E2}, the edge homomorphisms make the diagram
\[
\xymatrix{
H^*(E)\ar@{=}[r]\ar[d]^-{e_1}	&H^*(E)\ar@{=}[r]\ar[d]^-{e_2}	&\cdots\ar[r]	&H^*(E)\ar@{=}[r]\ar[d]^-{e_{\infty}}	&H^*(E)\ar[d]^-{\imath^*}\\
E^{0,*}_1\ar[r]^-{\jmath_1}	&E^{0,*}_2\ar[r]^-{\jmath_2}	&\cdots\ar[r]	&E^{0,*}_{\infty}\ar[r]^-{\jmath}	&H^*(F)
}
\]
commute, where $\imath^*$ is induced by $\imath:F\to E$, $\jmath$ is the inclusion and $\jmath_r$'s are the quotient maps.
\end{lemma}

\begin{proof}
We use the notation above. Consider commutative diagram
\[
\xymatrix{
F\ar[r]^-{\imath}\ar[d]^-{\imath}	&E\ar[r]^-{\pi}\ar@{=}[d]	&X\ar[d]\\
E\ar@{=}[r]							&E\ar[r]^-{c}				&\text{pt}
}
\]
where $c$ is the constant map. Since the quasi-isomorphism $\Theta$ is natural, we have
\[
\xymatrix{
\Omega(C^{\pi}_*(E),C_*(X))\ar[r]^-{\Theta}\ar[d]	&CN_*(F)\ar[d]^-{\imath_*}\\
\Omega(C^{c}_*(E),C_*(\text{pt}))\ar[r]^-{\Theta}	&CN_*(E)
}
\]
The supplement $\Z\to(C_*(\text{pt}))^{\vee}$ is a quasi-isomorphism of dg-algebras and $\varphi:\tilde{E}\to(C^{\pi}_*(E))^{\vee}$ is a quasi-isomorphism of dg-algebra modules. Using this replacement and taking dual and cohomology of the diagram, we obtain
\begin{equation}\label{diagram_edge homo}
\xymatrix{
H^*(\tilde{E})\ar[r]^-{\cong}\ar[d]^-{e^*}	&H^*(E)\ar[d]^-{\imath^*}\\
H^*(\mathcal{B}(\tilde{E} ,\tilde{X}))\ar[r]^-{\cong}	&H^*(F)
}
\end{equation}
Here $e^*$ is the composition
\[
e^*:H^*(\tilde{E})\cong H^*(\mathcal{B}(\tilde{E},\Z))\overset{e'}{\to}H^*(\mathcal{B}(\tilde{E},\tilde{X}))
\]
and $e'$ is induced by the inclusion $e:\text{B}^{*,*}(\tilde{E},\Z)\to\text{B}^{*,*}(\tilde{E},\tilde{X})$. Let $\{\hat{E}^{*,*}_{r}\}^{\infty}_{r=0}$ be the Eilenberg-Moore spectral sequence on $E\overset{=}{\to}E\overset{c}{\to}\text{pt}$. Then $\hat{E}^{*,*}_0\cong\text{B}^{*,*}(\tilde{E},\Z)$ and the~$\hat{E}_1$-page collapses to $H^*(\tilde{E})$. The inclusion $e:\text{B}^{*,*}(\tilde{E},\Z)\to\text{B}^{*,*}(\tilde{E},\tilde{X})$ gives the following commutative diagram
\[
\xymatrix{
H^*(\tilde{E})\ar@{=}[r]\ar[d]^-{\tilde{e}_1}	&H^*(\tilde{E})\ar@{=}[r]\ar[d]^-{\tilde{e}_2}	&\cdots\ar[r]	&H^*(\tilde{E})\ar@{=}[r]\ar[d]^-{\tilde{e}_{\infty}}	&H^*(\tilde{E})\ar[d]^-{e^*}\\
E^{0,*}_1\ar[r]^-{\jmath_1}	&E^{0,*}_2\ar[r]^-{\jmath_2}	&\cdots\ar[r]	&E^{0,*}_{\infty}\ar[r]^-{\tilde{\jmath}}	&H^*(\mathcal{B}(\tilde{E},\tilde{X}))
}
\]
where $\tilde{e}_r:H^*(\tilde{E})\cong H^*(E)\overset{e_r}{\to}E^{0,*}_r$ and $\tilde{\jmath}:E^{0,*}_{\infty}\overset{\jmath}{\to}H^*(F)\cong H^*(\mathcal{B}(\tilde{E},\tilde{X}))$. Combine this to~(\ref{diagram_edge homo}) and obtain the asserted commutative diagram.
\end{proof}

\subsection{Regular sequences and freeness}

Here we use the alternative description of graded objects. A commutative graded algebra $A=\bigoplus_{i\geq0}A_i$ is an algebra with a grading such that~$ab=(-1)^{ij}ba$ for $a\in A^i$ and $b\in A^j$, and a graded~$A$-module $M=\bigoplus_{j\geq0}M_j$ is the direct sum of a family of $A$-modules. A set~$\{r_1,\ldots,r_n\}$ of elements in $M$ is called an~\emph{$M$-regular sequence} if ideal $(r_1,\ldots,r_n)M$ is not equal to $M$ and the multiplication
\[
r_i:M/(r_1,\ldots,r_{i-1})M\to M/(r_1,\ldots,r_{i-1})M
\]
is injective for $1\leq i\leq n$. In the special case where $M$ is a $\mathbb{K}[x_1,\ldots,x_n]$-module for some field $\mathbb{K}$ and the grading of $M$ has a lower bound, $M$ is a free $\mathbb{K}[x_1,\ldots,x_n]$-module if~$\{x_i\}^n_{i=1}$ is a regular sequence in $M$. We want to extend this fact to the case where $M$ is a~$\Z[x_1,\ldots,x_n]$-module. Recall a corollary of the graded Nakayama Lemma.

\begin{lemma}\label{lemma_nakayama}
Let $A$ be a graded ring and let $M$ be an $A$-module. Suppose $A$ and $M$ are non-negatively graded, and $I=(r_1,\ldots,r_n)\subset A$ is an ideal generated by homogeneous elements~$r_i$ of positive degrees. If $\{m_{\alpha}\}_{\alpha\in S}$ is a set of homogeneous elements in $M$ whose images generate~$M/IM$, then $\{m_{\alpha}\}_{\alpha\in S}$ generates $M$.
\end{lemma}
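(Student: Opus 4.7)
The plan is to run the standard graded Nakayama argument: introduce the submodule $N=\sum_{\alpha\in S}Am_\alpha\subseteq M$ generated by the given family, form the quotient $Q=M/N$, and show $Q=0$ by a minimal-degree argument.

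First I would translate the hypothesis into an equation at the level of $M$. Since the images $\bar{m}_\alpha\in M/IM$ generate $M/IM$ as an $A$-module (equivalently as an $A/I$-module), every element of $M$ can be written as a sum of elements of $N$ plus an element of $IM$, giving $M=N+IM$. Passing to $Q=M/N$, this yields $Q=IQ$, so it suffices to show $Q=0$.

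Next I would use the grading. The module $Q$ inherits a non-negative grading from $M$, and the generators $r_1,\ldots,r_n$ of $I$ all have strictly positive degree, so every element of $IQ$ lies in the submodule $\bigoplus_{j\geq d_{\min}+1}Q_j$, where $d_{\min}=\min_i\deg(r_i)>0$. Suppose for contradiction that $Q\neq0$, and let $d$ be the smallest integer with $Q_d\neq 0$ (this exists because $Q$ is non-negatively graded and $A$ is connected in degree zero in the sense that only finitely many negative shifts are forbidden). Any nonzero homogeneous $q\in Q_d$ lies in $IQ$ by the equality $Q=IQ$, so we can write $q=\sum_i r_i q_i$ with each $q_i$ homogeneous of degree $d-\deg(r_i)<d$. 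By minimality of $d$ each such $q_i$ is zero, hence $q=0$, a contradiction. Therefore $Q=0$, i.e.\ $N=M$, which is exactly the claim that $\{m_\alpha\}_{\alpha\in S}$ generates $M$.

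The only delicate point is the existence of a minimal nonzero degree in $Q$, which is where the non-negatively graded hypothesis on $M$ is used; once this is in place, the rest of the proof is formal. I do not expect a serious obstacle, since this is essentially the textbook graded Nakayama lemma applied to a possibly infinitely generated module, and the argument above never needs the set $S$ to be finite.
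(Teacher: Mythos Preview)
Your argument is the standard graded Nakayama proof and is correct. The paper does not actually prove this lemma: it is stated there without proof, introduced only as ``a corollary of the graded Nakayama Lemma,'' so there is nothing to compare against beyond noting that your approach is exactly the textbook one the authors are invoking.

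Two small cosmetic points that do not affect correctness: the sentence ``every element of $IQ$ lies in the submodule $\bigoplus_{j\geq d_{\min}+1}Q_j$'' is slightly off (it should be $j\geq d_{\min}$, and in any case you do not use this observation---what you actually use is that $(IQ)_d=\sum_i r_i Q_{d-\deg r_i}$ involves only strictly lower degrees of $Q$), and the parenthetical about ``$A$ is connected in degree zero in the sense that only finitely many negative shifts are forbidden'' is garbled; the existence of a minimal nonzero degree in $Q$ follows simply from $Q$ being non-negatively graded. Cleaning up those two phrases would make the write-up airtight.
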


\begin{lemma}\label{lemma_regular freeness}
Let $M$ be a $\Z[x_1,\ldots,x_n]$-module with non-negative degrees. If $M/(x_1,\ldots,x_n)M$ is a free $\Z$-module and $\{x_1,\ldots,x_n\}$ is an $M$-regular sequence, then $M$ is a free $\Z[x_1,\ldots,x_n]$-module.
\end{lemma}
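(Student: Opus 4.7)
The plan is to prove the lemma by induction on $n$. The base case $n=0$ is immediate since $M=M/(0)M$ is then free over $\Z$ by hypothesis. For the inductive step, set $M'=M/x_1M$ and view it as a $\Z[x_2,\dots,x_n]$-module. Since $M'/(x_2,\dots,x_n)M'\cong M/(x_1,\dots,x_n)M$ is a free $\Z$-module, and $\{x_2,\dots,x_n\}$ is an $M'$-regular sequence directly from the definition of the $M$-regular sequence $\{x_1,\dots,x_n\}$, the inductive hypothesis produces a homogeneous $\Z[x_2,\dots,x_n]$-basis $\{\bar m_\alpha\}$ of $M'$. I would then choose homogeneous lifts $m_\alpha\in M$ and argue that $\{m_\alpha\}$ is a free $\Z[x_1,\dots,x_n]$-basis of $M$.

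For generation, the images of the $m_\alpha$ in $M/(x_1,\dots,x_n)M$ form a $\Z$-basis and in particular span, so the graded Nakayama Lemma~\ref{lemma_nakayama} gives that $\{m_\alpha\}$ generates $M$ over $\Z[x_1,\dots,x_n]$. For linear independence, consider a relation $\sum f_\alpha m_\alpha=0$ with $f_\alpha\in\Z[x_1,\dots,x_n]$. Splitting into homogeneous components, we may assume the relation is homogeneous of some fixed degree, so each $f_\alpha$ is homogeneous and has bounded $x_1$-degree.

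The key technical step is to show by induction on $k\geq 0$ that $x_1^k\mid f_\alpha$ for every $\alpha$. Assuming this for $k$, write $f_\alpha=x_1^k g_\alpha^{(k)}$, so the relation becomes $x_1^k\sum g_\alpha^{(k)}m_\alpha=0$; since $x_1$ is $M$-regular, so is $x_1^k$, yielding $\sum g_\alpha^{(k)}m_\alpha=0$. Splitting $g_\alpha^{(k)}=c_\alpha+x_1 g_\alpha^{(k+1)}$ with $c_\alpha\in\Z[x_2,\dots,x_n]$ and reducing modulo $x_1$ gives $\sum c_\alpha\bar m_\alpha=0$ in $M'$, which by freeness of $M'$ over $\Z[x_2,\dots,x_n]$ forces $c_\alpha=0$, hence $x_1^{k+1}\mid f_\alpha$. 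Since each $f_\alpha$ is homogeneous of fixed degree and therefore has bounded $x_1$-degree, the conclusion $x_1^k\mid f_\alpha$ for all $k$ forces $f_\alpha=0$. The main delicate point is precisely the termination of this iteration, which is where the non-negative graded hypothesis and the reduction to homogeneous relations become essential; without that control on $x_1$-degree one would need a different argument, for instance via a Koszul-resolution computation.
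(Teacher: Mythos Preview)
Your proof is correct and is essentially the same as the paper's: both use Nakayama's Lemma for generation and the ``degree-climbing'' argument (reduce modulo one variable, invoke the previous case, factor that variable out by regularity, iterate until the degrees blow up) for linear independence. The only cosmetic difference is that the paper fixes a lift of a $\Z$-basis of $M/(x_1,\dots,x_n)M$ once and then inducts internally on $i$ to show its image is a $\Z[x_{n-i+1},\dots,x_n]$-basis of $M/(x_1,\dots,x_{n-i})M$, whereas you induct on $n$ and apply the lemma to $M/x_1M$; the underlying argument is identical.
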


\begin{proof}
Denote $I=(x_1,\ldots,x_n)$. By assumption there is a set $\{m_{\alpha}\}_{\alpha\in S}$ of homogeneous elements in $M$ such that their quotient images form a basis in $M/IM$. By Lemma~\ref{lemma_nakayama} $\{m_{\alpha}\}_{\alpha\in S}$ generates $M$. We need to show that $\{m_{\alpha}\}_{\alpha\in S}$ is linear independent over $\Z[x_1,\ldots,x_n]$.

For $0\leq i\leq n$, let $M_i=M/(x_1,\ldots,x_{n-i})M$, $A_i=\Z[x_{n+1-i},\ldots,x_n]$ and $m_{\alpha,i}$ be the quotient image of $m_{\alpha}$ in $M_i$. Prove that $M_i$ is a free $A_i$-module with a basis $\{m_{\alpha,i}\}_{\alpha\in S}$ by induction on $i$. For $i=0$, $M_0=M/IM$ and $A_0=\Z$. The statement is true since $\{m_{\alpha,0}\}_{\alpha\in S}$ is a basis by construction. Assume the statement holds for $i\leq k$. For $i=k+1$, if there is a collection $\{f_\alpha\}_{\alpha\in S}$ of polynomials satisfying
\begin{equation}\label{eqn_linear combination}
\sum_{\alpha\in S}f_{\alpha}\cdot m_{\alpha,k+1}=0,
\end{equation}
we show that all $f_{\alpha}$'s are zero.

If not, then there are finitely many non-zero polynomials $f_{j_1},\ldots,f_{j_r}$. Quotient $M_{k+1}$ and~$A_{k+1}$ by ideal $(x_{n-k})$ and let $\bar{f}_{j_i}$ be the image of $f_{j_i}$ in $A_k$. Then~(\ref{eqn_linear combination}) becomes
\[
\sum^r_{i=1}\bar{f}_{j_i}\cdot m_{j_i,k}=0.
\]
By inductive assumption, $\{m_{\alpha,k}\}$ is a basis in $M_k$. So $\bar{f}_{j_i}=0$ and $f_{j_i}=x_{n-k}g_{j_i}$ for some polynomial $g_{j_i}\in A_{k+1}$. Since $x_{n-k}$ is not a zero-divisor, putting $f_{j_i}=x_{n-k}g_{j_i}$ to~(\ref{eqn_linear combination}) and get
\[
\sum^r_{i=1}g_{j_i}\cdot m_{j_i,k+1}=0.
\]
So $g_{j_1},\ldots,g_{j_r}$ are non-zero polynomials satisfying~(\ref{eqn_linear combination}) and $|g_{j_i}|<|f_{j_i}|$ for $1\leq i\leq r$. Iterating this argument implies that $|f_{j_i}|$'s are arbitrarily large, but this is impossible. So $f_{j_i}$'s must be zero and $\{m_{\alpha,k+1}\}$ is linear independent. It follows that $M_{k+1}$ is a free $A_{k+1}$-module. 
\end{proof}

\section{Cohomology rings of polyhedral products}
Let $[m]=\{1,\ldots,m\}$, let $ K$ be a simplicial complex on $[m]$ and let $(\underline{X},\underline{A})=\{(X_i,A_i)\}^m_{i=1}$ be a sequence of pairs of relative CW-complexes. For any simplex $\sigma\in K$ define
\[
(\underline{X}, \underline{A})^{\sigma}=\left\{(x_1,\ldots,x_m)\in\left.\prod^m_{i=1}X_i\right|x_i\in A_i\text{ for }i\notin\sigma\right\}
\]
as a subspace of $\prod^m_{i=1}X_i$, and define the \emph{polyhedral product}
\[
(\underline{X},\underline{A})^{K}=\bigcup_{\sigma\in K}(\underline{X},A)^{\sigma}
\]
to be the union of $(\underline{X},\underline{A})^{\sigma}$ over $\sigma\in K$.

If $X_i=\C\PP^{\infty}$ and $A_i=\ast$ for all $i$, then $(\C\PP^{\infty},\ast)^{K}$ is homotopy equivalent to Davis-Januszkiewicz space~\cite[Theorem 4.3.2]{BP}. For any principal ideal domain $R$, $H^*((\C\PP^{\infty},\ast)^{K};R)$ is isomorphic to the Stanley-Reisner ring $R[x_1,\ldots,x_m]/I_{K}$. Here $I_{K}$ is the ideal generated by $x_{j_1}\cdots x_{j_k}$ for $x_{j_i}\in\tilde{H}^*(X_{j_i};R)$ and $\{j_1,\ldots,j_k\}\notin K$, and is called the \emph{Stanley-Reisner ideal of $K$}. In general, a similar formula holds for $H^*((\underline{X},\ast)^K)$ whenever $X_i$'s are any spaces with free cohomology.

\begin{thm}[\cite{BBCG}]\label{prop_BBCG}
Let $R$ be a principal ideal domain, let $ K$ be a simplicial complex on $[m]$ and let $\underline{X}=\{X_i\}^m_{i=1}$ be a sequence of CW-complexes. If $H^*(X_i;R)$ is a free $R$-module for all $i$, then
\[
H^*((\underline{X},\ast)^{K};R)\cong\bigotimes^m_{i=1}H^*(X_i;R)/I_{K},
\]
where $I_{K}$ is generated by $x_{j_1}\otimes\cdots\otimes x_{j_k}$ for $x_{j_i}\in\tilde{H}^*(X_{j_i};R)$ and $\{j_1,\ldots,j_k\}\notin K$ and is called the generalized Stanley-Reisner ideal of $K$.
\end{thm}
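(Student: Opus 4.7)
The plan is to construct a natural ring homomorphism
\[
\phi_K : \bigotimes_{i=1}^m H^*(X_i;R)\Big/ I_K \longrightarrow H^*((\underline{X},\ast)^K;R)
\]
arising from the inclusion $\iota_K : (\underline{X},\ast)^K \hookrightarrow \prod_{i=1}^m X_i$ composed with the K\"unneth isomorphism $H^*(\prod_i X_i;R) \cong \bigotimes_i H^*(X_i;R)$, which has no Tor terms since each $H^*(X_i;R)$ is free over the PID $R$. A preliminary check shows that every generator $x_{j_1}\otimes\cdots\otimes x_{j_k}$ of $I_K$ is killed by $\iota_K^*$, because on any piece $(\underline{X},\ast)^\sigma$ with $\sigma\in K$ at least one index $j_\ell$ lies outside $\sigma$ and that tensor factor restricts to $\tilde H^*(\ast;R)=0$. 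I would then prove $\phi_K$ is an isomorphism by induction on the number of simplices of $K$.

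The base case is $K=\overline{\sigma}$, a single closed simplex (including the degenerate $K=\{\emptyset\}$). Here $(\underline{X},\ast)^K \cong \prod_{i\in\sigma}X_i$ embedded in $\prod_i X_i$ by basepoints, and K\"unneth together with the observation that $I_K$ is generated by $\tilde H^*(X_j;R)$ for $j\notin\sigma$ makes $\phi_K$ a visible isomorphism. For the inductive step I would fix a maximal simplex $\sigma\in K$ and set $K'=K\setminus\{\sigma\}$, so that $K=K'\cup\overline{\sigma}$ with $K'\cap\overline{\sigma}=\partial\overline{\sigma}$. This is a pushout along a cofibration, and I would compare its Mayer--Vietoris long exact sequence to the algebraic short exact sequence
\[
0\longrightarrow \bigotimes_i H^*(X_i)/I_K \longrightarrow \bigotimes_i H^*(X_i)/I_{K'}\oplus \bigotimes_i H^*(X_i)/I_{\overline{\sigma}} \longrightarrow \bigotimes_i H^*(X_i)/I_{\partial\overline{\sigma}} \longrightarrow 0
\]
coming from the elementary identities $I_K=I_{K'}\cap I_{\overline{\sigma}}$ and $I_{\partial\overline{\sigma}}=I_{K'}+I_{\overline{\sigma}}$. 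Applying the inductive hypothesis to $K'$ and $\partial\overline{\sigma}$ (which have strictly fewer simplices), the base case to $\overline{\sigma}$, and the five-lemma would then force $\phi_K$ to be an isomorphism.

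The main obstacle is checking that the Mayer--Vietoris long exact sequence actually splits into short exact pieces, which reduces to showing the restriction $H^*((\underline{X},\ast)^{\overline{\sigma}};R)\to H^*((\underline{X},\ast)^{\partial\overline{\sigma}};R)$ is surjective in every degree. Under the inductive identifications this becomes the algebraic quotient $\bigotimes_i H^*(X_i)/I_{\overline{\sigma}}\twoheadrightarrow \bigotimes_i H^*(X_i)/I_{\partial\overline{\sigma}}$, which is manifestly surjective since $I_{\overline{\sigma}}\subseteq I_{\partial\overline{\sigma}}$; the freeness hypothesis is what allows this algebraic identification to hold on the topological side via clean K\"unneth computations, and it is precisely what fails in the general torsion-laden setting that motivates the rest of the paper.
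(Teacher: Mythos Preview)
Your argument is correct. The Mayer--Vietoris induction on the number of simplices is a standard and self-contained way to prove this result, and you have handled the one genuine subtlety (the splitting of the long exact sequence into short exact pieces) correctly: surjectivity of the restriction to the boundary follows from the inductive identification together with $I_{\overline{\sigma}}\subseteq I_{\partial\overline{\sigma}}$.

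The paper does not prove this theorem itself but cites it from \cite{BBCG}, and the method visible in the paper's proof of the torsion-free analogue (Proposition~\ref{lemma_free SR ring}) is quite different from yours. There one uses the Bahri--Bendersky--Cohen--Gitler stable splitting $\Sigma(\underline{X},\ast)^{K}\simeq\bigvee_{J\in K}\Sigma\underline{X}^{\wedge J}$ to identify, after one suspension, the cofiber of $\iota_K$ as $\bigvee_{J\notin K}\Sigma\underline{X}^{\wedge J}$; the cohomology long exact sequence of the cofibration then splits, and K\"unneth for smash products identifies the kernel of $\iota_K^*$ with $I_K$ directly. Your approach has the virtue of being entirely elementary---it needs only Mayer--Vietoris and K\"unneth, not the stable splitting theorem---whereas the BBCG route gives the global additive decomposition $H^*((\underline{X},\ast)^K)\cong\bigoplus_{J\in K}\tilde H(\underline{X})^{\otimes J}$ in one stroke and is better suited to the modifications the paper needs later, where one must track torsion-free quotients through the same diagram.
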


The proof of Theorem~\ref{prop_BBCG} uses the strong form of K\"{u}nneth Theorem, which says that
\[
\mu:\bigotimes^m_{i=1}H^*(X_i;R)\to H^*(\prod^m_{i=1}X_i;R),\quad x_1\otimes\cdots\otimes x_m\mapsto\pi^*_1(x_1)\cup\cdots\cup\pi^*_m(x_m),
\]
where $\pi^*_j$ is induced by projection $\pi_j:\prod^m_{i=1}X_i\to X_j$, is isomorphic if all $H^*(X_i;R)$'s are free. In the reduced version of K\"{u}nneth Theorem, $\bar{\mu}:\bigotimes^m_{i=1}\tilde{H}^*(X_i)\to\tilde{H}^*(\bigwedge^m_{i=1}X_i)$ is also isomorphic if all $\tilde{H}^*(X_i;R)$'s are free. The goal of this section is to modify Theorem~\ref{prop_BBCG} by removing the freeness assumption on $H^*(X_i)$'s. As a trade-off, we need to mod out the torsion elements of $H^*(X_i)$'s. First let us refine K\"{u}nneth Theorem.

\begin{lemma}\label{lemma_free Kunneth}
Let $\underline{X}=\{X_i\}^m_{i=1}$ be a sequence of spaces $X_i$. Then the induced morphisms
\[
\begin{array}{c c c}
\mu_f:\bigotimes^m_{i=1}H^*_f(X_i)\to H^*_f(\prod^m_{i=1}X_i)
&\text{and}
&\bar{\mu}_f:\bigotimes^m_{i=1}\tilde{H}^*_f(X_i)\to\tilde{H}^*_f(\bigwedge^m_{i=1}X_i).
\end{array}
\]
are isomorphisms as algebras, and there is a commutative diagram
\[
\xymatrix{
\bigotimes^m_{i=1}\tilde{H}^*_f(X_i)\ar[r]^-{\bar{\mu}_f}\ar[d]	&\tilde{H}^*_f(\bigwedge^m_{i=1}X_i)\ar[d]^-{q^*_f}\\
\bigotimes^m_{i=1}H^*_f(X_i)\ar[r]^-{\mu_f}						&H^*_f(\prod^m_{i=1}X_i)
}\]
where $q^*_f$ is induced by the quotient map $q:\prod^m_{i=1}X_i\to\bigwedge^m_{i=1}X_i$.
\end{lemma}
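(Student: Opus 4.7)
The plan is to invoke the Künneth theorem in both its unreduced and reduced (smash product) forms, and then remove the $\mathrm{Tor}$ error terms by modding out torsion. The key inputs from the earlier part of the section will be Lemma~\ref{lemma_free SES} (which shows that a split short exact sequence remains split after passing to torsion-free quotients) and Lemma~\ref{lemma_tensor product free} (which commutes $\otimes$ with $(-)_f$).

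For $\mu_f$, I would induct on $m$, with $m=1$ being trivial. In the case $m=2$, since $X_1$ and $X_2$ are connected finite type CW-complexes (so that each $H^n(X_i)$ is finitely generated), the cohomology Künneth theorem gives a naturally split short exact sequence
\[
0\longrightarrow H^*(X_1)\otimes H^*(X_2)\overset{\mu}{\longrightarrow}H^*(X_1\times X_2)\longrightarrow T\longrightarrow 0,
\]
where $T$ is a direct sum of terms of the form $\mathrm{Tor}^{\Z}_1(H^p(X_1),H^q(X_2))$ and is therefore purely torsion. Applying the split half of Lemma~\ref{lemma_free SES} produces a split short exact sequence of torsion-free quotients, in which the right-hand term $T_f$ vanishes. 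Thus $\mu_f:(H^*(X_1)\otimes H^*(X_2))_f\to H^*_f(X_1\times X_2)$ is an isomorphism, and composing with the isomorphism $(H^*(X_1)\otimes H^*(X_2))_f\cong H^*_f(X_1)\otimes H^*_f(X_2)$ of Lemma~\ref{lemma_tensor product free} yields the claim. The induction step is the same argument applied to $(\prod_{i<m}X_i)\times X_m$, using that a finite product of finite type CW-complexes is finite type. For $\bar{\mu}_f$, I would run an identical argument using the reduced Künneth formula for smash products, whose cokernel is again a direct sum of $\mathrm{Tor}^{\Z}_1$ terms and hence torsion; alternatively one can deduce the reduced statement from the unreduced one via the splitting induced by the cofibre sequence $\bigvee X_i\to\prod X_i\to\bigwedge X_i$.

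For the commutative square, the point is that before taking torsion-free quotients the analogous square commutes by naturality of the cross product: both $\mu$ and $\bar\mu$ are iterated cross products, and $q^*$ is induced by the quotient $q$ to the smash product, so the required identity $q^*\circ\bar\mu=\mu\circ(\text{inclusion})$ is immediate. Passing to torsion-free quotients, which is functorial by the discussion at the start of Section~2.1, gives the stated diagram. Since $\mu$ and $\bar\mu$ are ring maps (with the usual graded tensor product ring structure on the domain), the induced maps $\mu_f,\bar{\mu}_f$ are ring maps, and hence isomorphisms of algebras. The main obstacle is really just the bookkeeping: one must confirm that the Künneth sequence is genuinely split at the level of abelian groups (it is, although unnaturally), so that Lemma~\ref{lemma_free SES} applies; once that is granted, the argument is a straightforward assembly of the two lemmas already proved.
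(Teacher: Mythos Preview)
Your proposal is correct and follows essentially the same approach as the paper: invoke the K\"unneth short exact sequence (whose cokernel is torsion), apply Lemma~\ref{lemma_free SES} to the split sequence, and then Lemma~\ref{lemma_tensor product free} to identify the domain. The only cosmetic difference is that the paper handles both $\mu_f$ and $\bar\mu_f$ at once by quoting a relative K\"unneth theorem for pairs $(X,A)\times(Y,B)$ and then specializing $A,B$ to basepoints or to the empty set, whereas you treat the unreduced and reduced cases separately; either packaging is fine.
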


\begin{proof}
It suffices to show the $m=2$ case. Let $(X, A)$ and $(Y, B)$ be pairs of relative CW-complexes and let $\pi_X:(X\times Y, A\times Y)\to(X, A)$ and $\pi_Y:(X\times Y, X\times B)\to(Y, B)$ be projections. By the generalized version of K\"{u}nneth Theorem \cite[Chapter XIII, Theorem 11.2]{massey}, the sequence
\[
0\to\bigoplus_{i+j=n}H^i(X, A)\otimes H^j(Y, B)\overset{\mu'}{\to}H^n(X\times Y, X\times B\cup A\times Y)\to T\to0
\]
where $T$ is a torsion term and $\mu'$ sends $u\otimes v\in H^i(X, A)\otimes H^j(Y, B)$ to $\pi^*_X(u)\cup\pi^*_Y(v)$, is split exact. By Lemma~\ref{lemma_tensor product free} $(H^*(X, A)\otimes H^*(Y, B))_f\cong H^*_f(X, A)\otimes H^*_f(Y, B)$ and by Lemma~\ref{lemma_free SES}
\[
\mu'_f:H^*_f(X, A)\otimes H^*_f(Y, B)\longrightarrow H^*_f(X\times Y, X\times B\cup A\times Y)
\]
is an isomorphism. Since $\mu'$ is multiplicative, so is $\mu'_f$. Letting $A$ and $B$ be the basepoints of $X$ and $Y$ or be the empty set gives the isomorphisms
\[
\begin{array}{c c c}
\mu_f:H^*_f(X)\otimes H^*_f(Y)\cong H^*_f(X\times Y)
&\text{and}
&\bar{\mu}_f:\tilde{H}^*_f(X)\otimes\tilde{H}^*_f(Y)\cong\tilde{H}^*_f(X\wedge Y).
\end{array}
\]
The asserted commutative diagram follows from commutative diagram
\[
\xymatrix{
\bigotimes^m_{i=1}\tilde{H}^*(X_i)\ar[r]^-{\bar{\mu}}\ar[d]	&\tilde{H}^*(\bigwedge^m_{i=1}X_i)\ar[d]^-{q^*}\\
\bigotimes^m_{i=1}H^*(X_i)\ar[r]^-{\mu}						&H^*(\prod^m_{i=1}X_i)
}\]
\end{proof}

\begin{prop}\label{lemma_free SR ring}
Let $\underline{X}=\{X_i\}^m_{i=1}$ be a sequence of spaces $X_i$, and let $ K$ be a simplicial complex on $[m]$. Then the inclusion $\imath:(\underline{X},\ast)^{K}\to\prod^m_{i=1}X_i$ induces a ring isomorphism
\[
H^*_f((\underline{X},\ast)^{K})\cong\left(\bigotimes^m_{i=1}H^*_f(X_i)\right)/I_{K}
\]
where $I_{K}$ is generated by $x_{j_1}\otimes\cdots\otimes x_{j_k}$ for $x_{j_i}\in\tilde{H}^*_f(X_{j_i})$ and $\{j_1,\ldots,j_k\}\notin K$.
\end{prop}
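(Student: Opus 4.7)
The plan is to realize the desired isomorphism as the quotient of a ring morphism induced by the inclusion $\imath$, show that the Stanley-Reisner ideal $I_K$ lies in its kernel, and then establish bijectivity by comparing module decompositions obtained from the stable splitting of the polyhedral product.

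First I would form the ring morphism
\[
\Phi:\bigotimes_{i=1}^m H^*_f(X_i)\xrightarrow{\mu_f}H^*_f\bigl(\textstyle\prod_{i=1}^m X_i\bigr)\xrightarrow{\imath^*_f}H^*_f\bigl((\underline{X},\ast)^K\bigr),
\]
where $\mu_f$ is the K\"unneth isomorphism from Lemma~\ref{lemma_free Kunneth}. To see that $\Phi$ kills $I_K$, fix a non-face $\tau=\{j_1,\ldots,j_k\}\notin K$ and let $\rho_\tau:\prod_{i=1}^m X_i\to \widehat{X}^\tau:=\bigwedge_{i\in\tau}X_i$ be the projection followed by the smash quotient. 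Every point of $(\underline{X},\ast)^K$ lies in $(\underline{X},\ast)^\sigma$ for some $\sigma\in K$, and since $\tau\not\subseteq\sigma$ at least one coordinate indexed by $\tau$ equals the basepoint; hence $\rho_\tau\circ\imath$ is the constant map. The commutative diagram of Lemma~\ref{lemma_free Kunneth} applied to the subfamily $\{X_i\}_{i\in\tau}$ identifies $\mu_f(x_{j_1}\otimes\cdots\otimes x_{j_k})$ with $\rho_\tau^*\bar{\mu}_f(x_{j_1}\otimes\cdots\otimes x_{j_k})$, which is annihilated by $\imath^*_f$. Thus $\Phi$ descends to a ring morphism $\bar{\Phi}:\bigl(\bigotimes_i H^*_f(X_i)\bigr)/I_K\to H^*_f((\underline{X},\ast)^K)$.

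To get bijectivity, I would match source and target as modules. Expanding $\bigotimes_i(\Z\oplus\tilde H^*_f(X_i))$ and quotienting by $I_K$ removes exactly the summands indexed by non-faces, yielding
\[
\bigl(\textstyle\bigotimes_i H^*_f(X_i)\bigr)/I_K\;\cong\;\bigoplus_{\sigma\in K}\bigotimes_{i\in\sigma}\tilde H^*_f(X_i),
\]
with $\sigma=\emptyset$ contributing $\Z$. On the target side, the Bahri-Bendersky-Cohen-Gitler stable splitting $\Sigma(\underline{X},\ast)^K\simeq\bigvee_{\sigma\in K,\,\sigma\neq\emptyset}\Sigma\widehat{X}^\sigma$, which is valid without freeness hypotheses on $H^*(X_i)$, combined with Lemma~\ref{lemma_free Kunneth}, produces the matching decomposition $\tilde H^*_f((\underline{X},\ast)^K)\cong\bigoplus_{\sigma\in K,\,\sigma\neq\emptyset}\bigotimes_{i\in\sigma}\tilde H^*_f(X_i)$.

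The main obstacle is verifying that $\bar{\Phi}$ respects these two decompositions summand by summand. The stable splitting of the product, $\Sigma\prod_i X_i\simeq\bigvee_{\sigma\subseteq[m],\,\sigma\neq\emptyset}\Sigma\widehat{X}^\sigma$, identifies the $\sigma$-summand of the target of $\mu_f$ with $\tilde H^*_f(\widehat{X}^\sigma)$, and the inclusion $\imath$ must be compatible with these two splittings: summands indexed by $\sigma\notin K$ are killed by $\imath^*_f$, while those indexed by $\sigma\in K$ are sent identically to the corresponding summands of $\tilde H^*_f((\underline{X},\ast)^K)$. I would verify this by restricting to the subspace inclusions $(\underline{X},\ast)^\sigma\cong\prod_{i\in\sigma}X_i\hookrightarrow(\underline{X},\ast)^K$ for each $\sigma\in K$ and tracing $\bar{\Phi}$ through the commutative diagram of Lemma~\ref{lemma_free Kunneth}. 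Once this summand-wise compatibility is established, $\bar{\Phi}$ is a module isomorphism, and hence the desired ring isomorphism since it is a ring morphism by construction.
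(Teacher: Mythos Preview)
Your approach is essentially the paper's: both hinge on the BBCG stable splitting to identify source and target additively with $\bigoplus_{\sigma\in K}\bigotimes_{i\in\sigma}\tilde H^*_f(X_i)$. The paper packages the argument through the cofiber sequence $(\underline{X},\ast)^K\xrightarrow{\imath}\prod_i X_i\to C$: after suspension it cites \cite[Theorem~2.21]{BBCG} to obtain a \emph{commutative} diagram in which the splitting equivalences for $\Sigma(\underline{X},\ast)^K$ and $\Sigma\prod_i X_i$ are already compatible with $\Sigma\imath$. The resulting short exact sequence on cohomology is split, Lemma~\ref{lemma_free SES} passes it to torsion-free quotients, and $\ker(\imath^*_f)=I_K$ is read off directly from the diagram.

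The one soft spot in your outline is the proposed verification of the last step. Restricting along the inclusions $j_\sigma:\prod_{i\in\sigma}X_i\hookrightarrow(\underline{X},\ast)^K$ does show that $\bar\Phi$ is injective (the maps $(j_\sigma)^*_f\circ\bar\Phi$ jointly detect all source summands), but an injection between finitely generated free $\Z$-modules of equal rank need not be surjective, and the restriction maps tell you nothing directly about how the BBCG decomposition of the \emph{target}---a stable splitting defined only after suspension and involving choices---interacts with $\imath^*_f$. What actually pins this down is the \emph{naturality} of the BBCG splitting with respect to $\imath$, i.e., precisely the commutative left square the paper imports from \cite{BBCG}; once that is in hand, $\imath^*_f$ becomes the projection $\bigoplus_{\sigma\subseteq[m]}\to\bigoplus_{\sigma\in K}$ and your $\bar\Phi$ is visibly an isomorphism. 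So your plan is correct, but ``tracing through subspace inclusions'' should be replaced by (or supplemented with) that naturality statement---at which point your argument and the paper's coincide.
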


\begin{proof}
This proof modifies the proofs in~\cite{BBCG, BP}. Consider homotopy cofibration sequence
\[
(\underline{X},\ast)^{K}\overset{\imath}{\to}\prod^m_{i=1}X_i\overset{\jmath}{\to}C,
\]
where $C$ is the mapping cone of $\imath$ and $\jmath$ is the inclusion. Suspend it and obtain a diagram of homotopy cofibration sequences
\begin{equation}\label{ES_Suspend X^m decomposition}
\xymatrix{
\Sigma(\underline{X},\ast)^{K}\ar[r]^-{\Sigma\imath}\ar[d]^-{a}	&\Sigma\left(\prod^m_{i=1}X_i\right)\ar[r]^-{\Sigma\jmath}\ar[d]^-{b}	&\Sigma C\ar[d]^-{c}\\
\bigvee_{J\in K}\Sigma\underline{X}^{\wedge J}\ar[r]^-{\bar{\imath}}	&\bigvee_{J\in [m]}\Sigma\underline{X}^{\wedge J}\ar[r]^-{\bar{\jmath}}	&\bigvee_{J\notin K}\Sigma\underline{X}^{\wedge J}
}
\end{equation}
where $\underline{X}^{\wedge J}=X_{j_1}\wedge\cdots\wedge X_{j_k}$ for $J=\{j_1,\ldots,j_k\}$, $\bar{\imath}$ is the inclusion, $\bar{\jmath}$ is the pinch map, $a$ is a homotopy equivalence by~\cite[Theorem 2.21]{BBCG}, $b$ is a homotopy equivalence, and $c$ is an induced homotopy equivalence. Take cohomology and get the diagram
\[
\xymatrix{
0\ar[r]	&\bigoplus_{J\notin K}\tilde{H}^*(\underline{X}^{\wedge J})\ar[r]^-{\bar{\jmath}^*}\ar[d]^-{c^*}	&\bigoplus_{J\in[m]}\tilde{H}^*(\underline{X}^{\wedge J})\ar[r]^-{\bar{\imath}^*}\ar[d]^-{b^*}	&\bigoplus_{J\in K}\tilde{H}^*(\underline{X}^{\wedge J})\ar[r]\ar[d]^-{a^*}	&0\\
0\ar[r]	&\tilde{H}^*(C)\ar[r]^-{\jmath^*}	&\tilde{H}^*(\prod^m_{i=1}X_i)\ar[r]^-{\imath^*}	&\tilde{H}^*((\underline{X},\ast)^K)\ar[r]	&0
}
\]
The rows are split exact sequences. All vertical maps are isomorphisms. All maps are additive while $\imath^*$ is multiplicative. Apply Lemma~\ref{lemma_free SES} to the diagram and get
\begin{equation}\label{ES_SR ring proof torsionfree}
\xymatrix{
0\ar[r]	&\bigoplus_{J\notin K}\tilde{H}^*_f(\underline{X}^{\wedge J})\ar[r]^-{\bar{\jmath}^*_f}\ar[d]^-{c^*_f}	&\bigoplus_{J\in[m]}\tilde{H}^*_f(\underline{X}^{\wedge J})\ar[r]^-{\bar{\imath}^*_f}\ar[d]^-{b^*_f}	&\bigoplus_{J\in K}\tilde{H}^*_f(\underline{X}^{\wedge J})\ar[r]\ar[d]^-{a^*_f}	&0\\
0\ar[r]	&\tilde{H}^*_f(C)\ar[r]^-{\jmath^*_f}	&\tilde{H}^*_f(\prod^m_{i=1}X_i)\ar[r]^-{\imath^*_f}	&\tilde{H}^*_f((\underline{X},\ast)^K)\ar[r]	&0
}
\end{equation}
By Lemma~\ref{lemma_free Kunneth}, $H^*_f(\prod^m_{i=1}X_i)\cong\bigotimes^m_{i=1}H^*_f(X_i)$ so there is a ring isomorphism
\[
H^*_f((\underline{X},\ast)^{K})\cong\left(\bigotimes^m_{i=1}H^*_f(X_i)\right)/\text{ker}(\imath^*_f).
\]
Since the rows are split exact and the vertical maps are isomorphic in~(\ref{ES_SR ring proof torsionfree}), $\text{ker}(\imath^*_f)$ is generated by $x_{j_1}\otimes\cdots\otimes x_{j_k}$ for $x_{j_i}\in\tilde{H}^*_f(X_{j_i})$ and $\{j_1,\ldots,j_k\}\notin K$. Therefore $\text{ker}(\imath^*_f)=I_K$ and~$H^*_f((\underline{X},\ast)^{K})\cong\left(\bigotimes^m_{i=1}H^*_f(X_i)\right)/I_K$.
\end{proof}

Proposition~\ref{lemma_free SR ring} can be refined as follows. If the quotient map $H^*(X_i)\to H^*_f(X_i)$ has right inverse for all $i$, then so does $H^*((\underline{X},\ast)^K)\to H^*_f((\underline{X},\ast)^K)$. To formulate this, we introduce new definition.

\begin{dfn}
A graded algebra $\mathcal{A}$ is \emph{free split} if the quotient map $\pi:\mathcal{A}\to\mathcal{A}_f$ has a section as algebras. In other words, there is a ring morphism $s:\mathcal{A}_f\to\mathcal{A}$ making the following diagrams commute
\[
\begin{array}{c c}
\xymatrix{
\mathcal{A}_f\ar[r]^-{s}\ar@{=}[dr]	&\mathcal{A}\ar[d]^-{\pi}\\
&\mathcal{A}_f
}
&
\xymatrix{
\mathcal{A}_f\otimes\mathcal{A}_f\ar[d]^-{s\otimes s}\ar[r]^-{m_f}	&\mathcal{A}_f\ar[d]^-{s}\\
\mathcal{A}\otimes \mathcal{A}\ar[r]^-{m}							&\mathcal{A},
}
\end{array}
\]
where $m$ and $m_f$ are multiplications in $\mathcal{A}$ and $\mathcal{A}_f$. We call $s$ a \emph{free splitting} of $\mathcal{A}$.
\end{dfn}

In general, a free splitting of $\mathcal{A}$ is not unique. Any two free splittings $s_1$ and $s_2$ differ by a torsion element.

\begin{remark}
Not all cohomology rings of spaces are free split. Let $C$ be the mapping cone of the composite
\[
P^3(2)\overset{\rho}{\longrightarrow}S^3\overset{[\imath_1,\imath_2]}{\longrightarrow}S^2\vee S^2,
\]
where $P^3(2)$ is the mapping cone of degree map $2:S^2{\to}S^2$, $\rho$ is the quotient map and $[\imath_1,\imath_2]$ is the Whitehead product. Then $H^*(C)\cong\Z[a,b]/(a^2=b^2=2ab=0)$ where $|a|=|b|=2$, and it is not free split.
\end{remark}

\begin{lemma}\label{lemma_free split}
Under the conditions of Proposition~\ref{lemma_free SR ring}, if $H^*(X_i)$ is free split for all $i$, then~$H^*((\underline{X},\ast)^{K})$ is free split.
\end{lemma}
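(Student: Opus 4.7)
The plan is to lift the given free splittings $s_i\colon H^{*}_f(X_i)\to H^{*}(X_i)$ factor by factor, cross them into $H^{*}(\prod_i X_i)$ via the K\"unneth map, and restrict to the polyhedral product. Concretely, I would define the ring morphism
\[
\tilde{s}\;=\;\imath^{*}\circ\mu\circ\Bigl(\bigotimes_{i=1}^{m} s_i\Bigr)\colon\bigotimes_{i=1}^{m} H^{*}_f(X_i)\longrightarrow H^{*}((\underline{X},\ast)^{K}),
\]
where $\mu\colon\bigotimes_i H^{*}(X_i)\to H^{*}(\prod_i X_i)$ is the (always multiplicative) cross-product map and $\imath^{*}$ is the restriction along $\imath\colon(\underline{X},\ast)^K\hookrightarrow\prod_i X_i$. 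This $\tilde s$ is a ring morphism because its three factors are.

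Next I would show that $\tilde s$ kills the generalized Stanley--Reisner ideal $I_K$, so it descends to a ring morphism $s\colon H^{*}_f((\underline X,\ast)^K)\cong\bigotimes_i H^{*}_f(X_i)/I_K\to H^{*}((\underline X,\ast)^K)$. For a generator $x_{j_1}\otimes\cdots\otimes x_{j_k}$ of $I_K$, each $s_{j_\ell}(x_{j_\ell})$ lies in $\tilde H^{*}(X_{j_\ell})$ (since $s_{j_\ell}$ is a degree-preserving ring map sending $1$ to $1$), so the image under $\mu\circ(\bigotimes s_i)$ lies in $\mu(\tilde H^{*}(\underline X)^{\otimes J})\subset\tilde H^{*}(\prod_i X_i)$ with $J=\{j_1,\dots,j_k\}\notin K$. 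The integral suspension decomposition~(\ref{ES_Suspend X^m decomposition}) used in Proposition~\ref{lemma_free SR ring} is a split short exact sequence of integral cohomology groups in which $\mu(\tilde H^{*}(\underline X)^{\otimes J})$ sits inside the $J$-summand $\tilde H^{*}(\underline X^{\wedge J})$ via $\pi_J^{*}\circ q^{*}$, and for $J\notin K$ this entire $J$-summand is in $\ker\imath^{*}$.

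To check that $s$ is a section of the quotient map $\pi\colon H^{*}((\underline X,\ast)^K)\to H^{*}_f((\underline X,\ast)^K)$, I would use naturality of passage to torsion-free quotients---Lemma~\ref{lemma_tensor product free} and the commutative squares recorded just after its statement---together with the identities $\pi_i\circ s_i=\mathrm{id}$ for each $i$. These combine to show $\pi\circ\tilde s=\imath^{*}_f\circ\mu_f$, which by Proposition~\ref{lemma_free SR ring} and Lemma~\ref{lemma_free Kunneth} is precisely the quotient morphism $\bigotimes_i H^{*}_f(X_i)\twoheadrightarrow\bigotimes_i H^{*}_f(X_i)/I_K\cong H^{*}_f((\underline X,\ast)^K)$. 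Descending this equality to the quotient gives $\pi\circ s=\mathrm{id}$, as desired.

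The main obstacle is the vanishing step for $I_K$. Without the freeness hypothesis on $H^{*}(X_i)$, the reduced cross product $\bar\mu\colon\tilde H^{*}(\underline X)^{\otimes J}\to\tilde H^{*}(\underline X^{\wedge J})$ need not be surjective, so I cannot identify $\mu(\tilde H^{*}(\underline X)^{\otimes J})$ with the full $J$-summand. What rescues the argument is that only the weaker containment in that $J$-summand is required, and this is supplied by the integral BBCG suspension splitting; the $J\notin K$ summands already sit in $\ker\imath^{*}$, which is enough.
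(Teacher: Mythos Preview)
Your proposal is correct and essentially identical to the paper's argument: both define the candidate splitting as $\imath^{*}\circ\mu\circ(\bigotimes_i s_i)$ on $\bigotimes_i H^{*}_f(X_i)$, check that it annihilates $I_K$ via the containment $\mu(\tilde H^{*}(\underline X)^{\otimes J})\subset\ker(\imath^{*})$ for $J\notin K$ (the paper phrases this as well-definedness of a map induced on the quotient by a commutative square), and then verify the section property using multiplicativity of $\imath^{*}$, $\mu$, and the $s_i$. Your final paragraph even isolates the one delicate point---that only containment in the $J$-summand is needed, not surjectivity of $\bar\mu$---which the paper invokes without comment.
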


\begin{proof}
Use the notations in the proof of Proposition~\ref{lemma_free SR ring}. For $1\leq i\leq m$, let~\mbox{$s_i:H^*_f(X_i)\to H^*(X_i)$} be a free splitting and let $s$ be the composite
\[
s:\bigotimes^m_{i=1}H^*_f(X_i)\overset{\bigotimes^m_{i=1}s_i}{\longrightarrow}\bigotimes^m_{i=1}H^*(X_i)\overset{\mu}{\longrightarrow}H^*(\prod^m_{i=1}X_i).
\]
Then $s$ is a free splitting of $H^*(\prod^m_{i=1}X_i)$. As $\imath^*_f:H^*_f(\prod^m_{i=1}X_i)\to H^*_f((\underline{X},\ast)^K)$ is surjective, define $s':H^*_f((\underline{X},\ast)^K)\to H^*((\underline{X},\ast)^K)$ by the diagram
\[
\xymatrix{
\bigotimes^m_{i=1}H^*_f(X_i)\ar[r]^-{s}\ar[d]^-{\imath^*_f}	&H^*(\prod^m_{i=1}X_i)\ar[d]^-{\imath^*}\\
H^*_f((\underline{X},\ast)^{K})\ar[r]^-{s'}	&H^*((\underline{X},\ast)^{K})
}
\]
We need to show that $s'$ is well defined. For $x\in H^*_f(\underline{X},\ast)^K$, let $y,y'\in\bigotimes^m_{i=1}H^*_f(X_i)$ be two preimages of $x$. Then $y-y'\in\text{ker}(\imath^*_f)=I_K$. For $J\notin K$, $s$ sends $\tilde{H}^*_f(\underline{X})^{\otimes J}$ to~$\mu\left(\tilde{H}^*(\underline{X})^{\otimes J}\right)$ which is contained in $\text{ker}(\imath^*)$. So $\imath^*\circ s(y-y')=0$ and $s'$ is well defined. Since $s$, $\imath^*$ and $\imath^*_f$ are multiplicative, so is $s'$. Furthermore, $s'$ is right inverse to the quotient map~$H^*((\underline{X},\ast)^{K})\to H^*_f((\underline{X},\ast)^{K})$. So $s'$ is a free splitting.
\end{proof}

\section{Realization of graded monomial ideal rings}

We follow the idea of~\cite{BBCG} and prove Theorem~\ref{thm_main thm} in several steps. In Section 4.1 we use Proposition~\ref{lemma_free SR ring} to prove the special case where the ideal $I$ of $A$ is square-free. In Sections~4.2 and~4.3 we construct a fibration sequence inspired by algebraic polarization method and show that the fiber $X_A$ is a realization modulo torsion of $A$. More precisely, we apply the Eilenberg-Moore spectral sequence defined in Section~\ref{section_EMSS} to calculate $H^*_f(X_A)$ and give the~$E_{\infty}$-page by the end of this section. The extension problem is long and complicated and will be discussed in Section 5.

\subsection{Quotient rings of square-free ideals}
Let $P=\Z[x_1,\ldots,x_m]\otimes\Lambda[y_1,\ldots,y_n]$ be a graded polynomial ring where $x_i$'s have arbitrary positive even degrees and $y_j$'s have arbitrary positive odd degrees, and let $I=(M_1,\ldots,M_r)$ be an ideal generated by monomials
\[
M_j=x_1^{a_{1j}}\cdots x_m^{a_{mj}}\otimes y_1^{b_{1j}}\cdots y_n^{b_{nj}},
\]
where $a_{ij}$'s are non-negative integers and $b_{ij}$'s are either $0$ or $1$. Then $A=P/I$ is a \emph{graded monomial ideal ring}. We say that $I$ is \emph{square-free} if $M_j$'s are square-free monomials, that is all $a_{ij}$'s are either 0 or 1.

In the following let
\begin{itemize}
\item
$\{i_1,\ldots,i_k\}+\{j_1,\ldots,j_l\}=\{i_1,\ldots,i_k,j_1+m,\ldots,j_l+m\}$ for $\{i_1,\ldots,i_k\}\subset[m]$ and $\{j_1,\ldots,j_l\}\subset[n]$ and
\item
$\underline{X}+\underline{Y}=\{X_1,\ldots,X_m,Y_1\ldots,Y_n\}$ for $\underline{X}=\{X_i\}^m_{i=1}$ and $\underline{Y}=\{Y_j\}^n_{j=1}$ sequences of spaces.
\end{itemize}
Given a square-free ideal $I$ of $A$, take $K$ to be a simplicial complex on $[m+n]$ by removing faces faces $\{i_1,\ldots,i_k\}+\{j_1,\ldots,j_l\}$ whenever $x_{i_1}\cdots x_{i_k}\otimes y_{j_1}\cdots y_{j_l}\in I$. Then $I$ is the generalized Stanley-Reisner ideal of $ K$.

\begin{lemma}\label{lemma_square free realization}
Let $\underline{X}=\{K(\Z,|x_i|)\}^m_{i=1}$ and $\underline{Y}=\{S^{|y_j|}\}^n_{j=1}$ and let $K$ be the simplicial complex defined as above. Then there is a ring isomorphism $H^*_f((\underline{X}+\underline{Y} , \ast)^{K})\cong A$. Furthermore,~$H^*((\underline{X}+\underline{Y} , \ast)^{K})$ is free split.
\end{lemma}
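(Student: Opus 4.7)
The plan is to apply Proposition~\ref{lemma_free SR ring} to the sequence $\underline{X}+\underline{Y}$ and identify each factor's torsion-free cohomology explicitly. Since $|x_i|$ is even, the classical rational computation gives $H^*(K(\Z,|x_i|);\Q)\cong\Q[\iota_i]$, so by the universal coefficient theorem $H^*_f(K(\Z,|x_i|))\cong\Z[x_i]$, where $x_i$ corresponds to the integral fundamental class $\iota_i$. Since $|y_j|$ is odd, $H^*(S^{|y_j|})\cong\Lambda[y_j]$ is already torsion-free. Combining these via Lemma~\ref{lemma_tensor product free},
\[
\bigotimes_{i=1}^{m} H^*_f(X_i)\otimes\bigotimes_{j=1}^{n} H^*_f(Y_j)\cong \Z[x_1,\dots,x_m]\otimes\Lambda[y_1,\dots,y_n]=P.
\]

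Next, I would apply Proposition~\ref{lemma_free SR ring} to obtain $H^*_f((\underline{X}+\underline{Y},\ast)^K)\cong P/I_K$. By the very definition of $K$ (a face $\{i_1,\ldots,i_k\}+\{j_1,\ldots,j_l\}$ is removed exactly when $x_{i_1}\cdots x_{i_k}\otimes y_{j_1}\cdots y_{j_l}\in I$), the generalized Stanley--Reisner ideal $I_K$ is generated by precisely the monomial generators of $I$ under the identification of the tensor product with $P$, so $I_K=I$ and $H^*_f((\underline{X}+\underline{Y},\ast)^K)\cong A$.

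For the free-split statement, I would verify that each factor has free-split cohomology and then invoke Lemma~\ref{lemma_free split}. For $X_i=K(\Z,|x_i|)$, the ring morphism $s_i\colon\Z[x_i]\to H^*(X_i)$ sending $x_i^n$ to the $n$th power $\iota_i^n$ of the fundamental class is well defined (since $|x_i|$ is even and $\Z[x_i]$ is the free commutative graded $\Z$-algebra on one generator) and sections the quotient $\pi\colon H^*(X_i)\to H^*_f(X_i)$. For $Y_j=S^{|y_j|}$ the cohomology is already torsion-free, so the identity is a free splitting. Lemma~\ref{lemma_free split} then produces a free splitting of $H^*((\underline{X}+\underline{Y},\ast)^K)$. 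The main obstacle is largely bookkeeping: matching the index-shift convention $\{j_1,\ldots,j_l\}\mapsto\{j_1+m,\ldots,j_l+m\}$ so that generators of $I_K$ correspond bijectively to the generators $M_j$ of $I$, and confirming that the classical identification $H^*_f(K(\Z,2k))\cong\Z[\iota]$ transports the fundamental class to the polynomial generator $x_i$.
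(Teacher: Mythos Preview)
Your proposal is correct and follows essentially the same route as the paper: both apply Proposition~\ref{lemma_free SR ring} after identifying $H^*_f(K(\Z,|x_i|))\cong\Z[x_i]$ and $H^*(S^{|y_j|})\cong\Lambda[y_j]$, and both establish the free splitting by using the universal property of the polynomial ring to extend $x_i\mapsto\iota_i$ to a ring map $\Z[x_i]\to H^*(X_i)$, then invoking Lemma~\ref{lemma_free split}. The only difference is cosmetic: you supply a justification for $H^*_f(K(\Z,|x_i|))\cong\Z[x_i]$ via the rational computation, whereas the paper simply asserts it.
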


\begin{proof}
Since $H^*_f(X_i)\cong\Z[x_i]$ and $H^*(Y_j)\cong\Lambda[y_j]$, the first part follows from Proposition~\ref{lemma_free SR ring}.

For the second part, it suffices to show that $H^*(X_i)$ and $H^*(Y_j)$ are free split by Lemma~\ref{lemma_free split}. For $1\leq j\leq n$, $H^*(Y_j)$ is free and hence free split. For $1\leq i\leq m$, let $x'_i$ be a generator of~$H^{|x_i|}(X_i)\cong\Z$. Then inclusion $\imath:\Z\langle{x'_i}\rangle\to H^*(X_i)$ extends to a ring morphism
\[
s:\Z[x'_i]\cong H^*_f(X_i)\to H^*(X_i).
\]
Let $\pi:H^*(X_i)\to H^*_f(X_i)$ be the quotient map. Since $\pi\circ\imath$ sends $x'_i$ to itself, by universal property $\pi\circ s$ is the identity map. So $s$ is a free splitting of $H^*(X_i)$.
\end{proof}

\subsection{Polarization of graded monomial ideal rings}

Now drop the square-free assumption on $I=(x_1^{a_{1j}}\cdots x_m^{a_{mj}}\otimes y_1^{b_{1j}}\cdots y_n^{b_{nj}}| 1\leq j\leq r)$ and suppose some $a_{ij}$'s are greater than~1. Following ideas from~\cite{BBCG} and~\cite{trevisan}, we use polarization to reduce the realization problem of $A$ to the special case when $I$ is square-free.

For $1\leq i\leq m$, let $a_i=\text{max}\{a_{i1},\ldots,a_{ir}\}$ be the largest index of $x_i$ among $M_j$'s, and let
\[
\Omega=\{(i,j)\in\N\times\N|1\leq i\leq m,1\leq j\leq a_i\}
\]
where $(i,j)\in\Omega$ are ordered in left lexicographical order, that is $(i,j)<(i',j')$ if $i<i'$, or if~$i=i'$ and $j<j'$. Let
\begin{eqnarray*}
P'
&=&\Z[x_{ij}|(i,j)\in\Omega]\otimes\Lambda[y_1,\ldots,y_n]\\
&=&\Z[x_{11},\ldots,x_{1a_1},x_{21},\ldots,x_{2a_2},\ldots,x_{m1},\ldots,x_{ma_m}]\otimes\Lambda[y_1,\ldots,y_n],
\end{eqnarray*}
be a graded polynomial ring where $|x_{ij}|=|x_i|$, let
\[
M'_j=(x_{11}x_{12}\cdots x_{1a_{1j}})(x_{21}x_{22}\cdots x_{2a_{2j}})\cdots(x_{m1}x_{m2}\cdots x_{ma_{mj}})\otimes(y^{b_{1j}}_1\cdots y^{b_{nj}}_n)
\]
and let $I'=(M'_1,\ldots,M'_r)$. Then $I'$ is square-free and $A'=P'/I'$ is called the \emph{polarization} of $A$.

Conversely, we can reverse the polarization process and obtain $A$ back from $A'$. Let
\[
\bar{\Omega}=\{(i,j)\in\N\times\N|1\leq i\leq m,2\leq j\leq a_i\}
\]
where $(i,j)\in\bar{\Omega}$ are ordered in left lexicographical order, and let $W$ be a graded polynomial ring
\begin{eqnarray*}
W
&=&\Z[w_{ij}|(i,j)\in\bar{\Omega}]\\
&=&\Z[w_{12},\ldots,w_{1a_1},w_{22},\ldots,w_{2a_2},\ldots,w_{m2},\ldots,w_{ma_m}]
\end{eqnarray*}
where $|w_{ij}|=|x_i|$. Define a ring morphism $\delta:W\to P'$ by $\delta(w_{ij})=x_{ij}-x_{i1}$ and make $P'$ a~$W$-module via $\delta$. Then $A'$ is a $W$-module and $A\cong A'/\bar{W}A'$, where $\bar{W}=\{W^i\}_{i>0}$.

\begin{lemma}\label{lemma_(u_i) is regular and free}
Let $A'$ be a square-free graded monomial ideal ring and let $W$ and $\delta$ be defined as above. Then $A'$ is a free $W$-module.
\end{lemma}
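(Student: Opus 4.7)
The plan is to invoke Lemma~\ref{lemma_regular freeness} with $M=A'$, viewing $A'$ as a $W$-module via $\delta$. This reduces the problem to verifying two conditions:
\begin{enumerate}[label=(\roman*)]
\item $A'/\bar{W}A'$ is a free $\Z$-module; and
\item the images $u_{ij}:=\delta(w_{ij})=x'_{ij}-x'_{i1}$, indexed by $(i,j)\in\bar\Omega$ in left lexicographic order, form an $A'$-regular sequence.
\end{enumerate}

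Condition (i) is almost immediate. Setting $x'_{ij}=x'_{i1}$ for every $(i,j)\in\bar\Omega$ transforms each square-free generator $M'_k$ back into the original monomial $M_k$, so after renaming $x'_{i1}$ as $x_i$ we obtain $A'/\bar{W}A'\cong P/I=A$. Since the generators of $I$ are monic monomials, the monomials of $P$ not divisible by any $M_k$ form a $\Z$-basis of $A$, so $A$ is free over $\Z$.

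The real work is condition (ii). I would proceed inductively in the lex order on $\bar\Omega$. For each $(i,j)\in\bar\Omega$, set
\[
B_{(i,j)}:=A'/(u_{i'j'}\ :\ (i',j')<(i,j))\, A',
\]
and show multiplication by $u_{ij}$ is injective on $B_{(i,j)}$. The key structural observation is that in $B_{(i,j)}$ the variables $x'_{i',1},\ldots,x'_{i',a_{i'}}$ (for $i'<i$) and $x'_{i,1},\ldots,x'_{i,j-1}$ have all been identified with a single variable, while $x'_{i,j},\ldots,x'_{i,a_i}$ and the $x'_{i'',k}$ for $i''>i$ remain distinct. Thus $B_{(i,j)}$ is still the quotient of a graded polynomial ring (tensor $\Lambda[y_1,\ldots,y_n]$) by a monomial ideal whose generators have known supports; the distinguished $\Z$-basis is the set of standard monomials outside that ideal. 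With this basis in hand, if $u_{ij}\cdot f=0$ in $B_{(i,j)}$, I would analyze the highest power of $x'_{ij}$ appearing in $f$: cancellation by $x'_{ij}-x'_{i1}$ produces standard monomial terms of strictly larger $x'_{ij}$-degree which cannot cancel against anything on the right, forcing each homogeneous $x'_{ij}$-component of $f$ to lie in the monomial ideal, hence $f=0$ in $B_{(i,j)}$.

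The main obstacle is this last step, namely verifying that $u_{ij}$ is a non-zero divisor on $B_{(i,j)}$; here the bookkeeping is delicate because the generators $M'_k$ do involve both $x'_{ij}$ and $x'_{i1}$ simultaneously whenever $a_{ik}\geq j$, so one cannot avoid tracking how multiplication by $x'_{ij}$ interacts with the relations. An alternative, and perhaps cleaner route, is to appeal to the classical fact from commutative algebra that polarization always produces a regular sequence of binomials $x'_{ij}-x'_{i1}$, which is a standard result for square-free monomial ideals and extends verbatim when we also tensor with an exterior algebra $\Lambda[y_1,\ldots,y_n]$ because the $y$-variables are central and the $y$-grading is preserved by the argument. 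Having established (i) and (ii), Lemma~\ref{lemma_regular freeness} concludes that $A'$ is free as a $W$-module.
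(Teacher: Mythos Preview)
Your proposal is correct and follows essentially the same route as the paper: invoke Lemma~\ref{lemma_regular freeness}, check that $A'/\bar W A'\cong A$ is $\Z$-free, and then prove by induction in lexicographic order that the $u_{ij}=x'_{ij}-x'_{i1}$ form an $A'$-regular sequence, after observing that each successive quotient $B_{(i,j)}$ is again a monomial quotient of a polynomial ring tensored with $\Lambda[y_1,\ldots,y_n]$. The only difference is at the crucial non-zero-divisor step: where you sketch a highest-$x'_{ij}$-degree argument (and rightly flag the bookkeeping as the main obstacle) or alternatively appeal to the classical polarization regularity fact, the paper defers to the combinatorial argument of Fr\"oberg, which shows that for each monomial $p_\alpha$ of $p$ both $x'_{ij}p_\alpha$ and $x'_{i1}p_\alpha$ lie in $\tilde I$ and then deduces $p_\alpha\in\tilde I$ by comparing exponents.
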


\begin{proof}
Since $A'/\bar{W}A'$ is a free $\Z$-module, by Lemma~\ref{lemma_regular freeness} it suffices to show that $\{w_{ij}\}_{(i,j)\in\bar{\Omega}}$ is a $A'$-regular sequence. Denote $N=|\bar{\Omega}|=\sum^m_{i=1}a_i-m$. For $1\leq k\leq N$, let $(i_k,j_k)\in\bar{\Omega}$ be the $k^{\text{th}}$ pair under lexicographical order and let $I_k=(w_{12},w_{13},\ldots,w_{i_kj_k})$. We need to show that multiplication $w_{i_{k+1}j_{k+1}}:A'/I_kA'\to A'/I_kA'$ is injective.

Observe that $A'/I_kA'=\tilde{P}/\tilde{I}$, where
\[
\tilde{P}=\Z[x_{11},x_{21},\ldots,x_{m1},x_{i_{k+1}j_{k+1}},x_{i_{k+2}j_{k+2}},\ldots x_{i_{N}j_{N}}]\otimes\Lambda[y_1,\ldots,y_n]
\]
and $\tilde{I}=(\tilde{M}_1,\ldots,\tilde{M}_r)$ is generated by monomials $\tilde{M}_j$ obtained by identifying $x_{ij}$ with $x_{i1}$ in~$M'_j$ for $(i,j)\leq (i_k,j_k)$. Suppose there is a polynomial $p\in\tilde{P}$ such that
\[
(x_{i_{k+1}j_{k+1}}-x_{i_{k+1}1})\cdot p\in\tilde{I}.
\]
We can use the combinatoric argument of~\cite[Page 31]{froberg} to show $p\in\tilde{I}$. Here is an outline of the argument. Write $p=\sum_{\alpha}p_{\alpha}$ as a sum of monomials $p_{\alpha}$. For each monomial $p_{\alpha}$, it can be shown that $x_{i_{k+1}j_{k+1}}p_{\alpha}$ and $x_{i_{k+1}1}p_{\alpha}$ are in $\tilde{I}$. Counting the indices of variables implies~$p_{\alpha}\in\tilde{I}$. So $p$ is in $\tilde{I}$ and multiplication $w_{i_{k+1}j_{k+1}}:A'/I_kA'\to A'/I_kA'$ is injective. Therefore $\{w_{ij}\}_{(i,j)\in\bar{\Omega}}$ is a regular sequence and $A'$ is a free $W$-module.
\end{proof}

\subsection{Constructing a realization modulo torsion $X_A$}
Let $A'=P'/I'$ be the polarization of $A$ and let~$K$ be a simplicial complex on $(\sum^m_{i=1}a_i+n)$ vertices such that $I'$ is the generalized Stanley-Reisner ideal of $K$. Construct a polyhedral product to realize $A'$. Take
\begin{eqnarray*}
\underline{X}
&=&\{X_{ij}=K(\Z,|x_i|)|(i,j)\in\Omega\}\\
&=&\{\underbrace{K(\Z,|x_1|),\ldots,K(\Z,|x_1|)}_{a_1},\underbrace{K(\Z,|x_2|),\ldots,K(\Z,|x_2|)}_{a_2}, \ldots, \underbrace{K(\Z,|x_m|),\ldots,K(\Z,|x_m|)}_{a_m}\}
\end{eqnarray*}
and
\[
\underline{Y}=\{Y_k=S^{|y_k|}|1\leq k\leq n\}=\{S^{|y_1|},S^{|y_2|},\ldots,S^{|y_n|}\}.
\]
By Lemma~\ref{lemma_square free realization} $H^*_f((\underline{X}+\underline{Y}, \ast)^{K})$ is isomorphic to $A'$.

For $1\leq i\leq m$, define $\delta_i:\prod^{a_i}_{j=1}X_{ij}\to\prod^{a_i}_{j=2}X_{ij}$ by $\delta_i(u_1,\ldots,u_{a_i})=(u_2\cdot u_1^{-1},\ldots,u_{a_i}\cdot u_1^{-1})$, and define $\delta:(\underline{X}+\underline{Y},\ast)^{K}\to\prod_{(i,j)\in\bar{\Omega}}X_{ij}$ to be the composite
\[
\delta:(\underline{X}+\underline{Y},\ast)^{K}\hookrightarrow\prod_{(i,j)\in\Omega}X_{ij}\times\prod^n_{k=1}Y_k\overset{\text{proj}}{\longrightarrow}\prod_{(i,j)\in\Omega}X_{ij}\overset{\prod^m_{i=1}\delta_i}{\longrightarrow}\prod_{(i,j)\in\bar{\Omega}}X_{ij}.
\]
As $\delta$ is a fibration, take $X_A$ to be its fiber. We claim $H^*_f(X_A)\cong A$.

\begin{notation}\label{notation_eilenberg-moore for X_A}
Let $\{E^{*,*}_r\}^{\infty}_{r=0}$ be the Eilenberg-Moore spectral sequence defined in Section~\ref{section_EMSS} on the fibration sequence
\begin{equation}\label{fib_polarization}
X_A\longrightarrow(\underline{X}+\underline{Y},\ast)^{K}\overset{\delta}{\longrightarrow}\prod_{(ij)\in\bar{\Omega}}X_{ij},
\end{equation}
where $H^*((\underline{X}+\underline{Y},\ast)^{K})$ is an $H^*(\prod_{(ij)\in\bar{\Omega}}X_{ij})$-module via $\delta^*$. 
\end{notation}

\begin{lemma}\label{lemma_E_infty}
For $E_{\infty}$-page, $(E^{0,q}_{\infty})_f\cong A^q$ as modules and $(E^{-p,q}_{\infty})_f=0$ for $p\neq0$.
\end{lemma}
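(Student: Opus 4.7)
My strategy is to compute the torsion-free part of the $E_2$-page using Lemma~\ref{lemma_free E2} in combination with the freeness of $A'$ over $W$ supplied by Lemma~\ref{lemma_(u_i) is regular and free}, and then to propagate the conclusion to $E_\infty$ by tracking which columns the differentials can mix.

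First, I identify the algebraic input. The base $B=\prod_{(i,j)\in\bar\Omega}X_{ij}$ is a product of even-degree Eilenberg--MacLane spaces, so $H^*_f(B)\cong W$ as graded rings. The total space $E=(\underline X+\underline Y,\ast)^K$ satisfies $H^*_f(E)\cong A'$ by Lemma~\ref{lemma_square free realization}. Because each $K(\Z,|x_i|)$ is a homotopy-commutative $H$-space whose fundamental class is primitive and inverts to its negative, a direct computation shows that $\delta_i$ takes $w_{ij}$ to $x'_{ij}-x'_{i1}$ on torsion-free cohomology; composing with $\imath^*$, the $W$-action on $H^*_f(E)$ induced by $\delta^*$ coincides with the polarization map $\delta:W\to A'$ defined before Lemma~\ref{lemma_(u_i) is regular and free}.

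Next, I apply Lemma~\ref{lemma_free E2} to the fibration~(\ref{fib_polarization}) with both $E$ and $\prod_{(i,j)\in\bar\Omega}X_{ij}$ simply connected. This yields a monomorphism
\[
\pi':(E^{-p,q}_2)_f\hookrightarrow\bigl(\text{Tor}^{-p,q}_W(\Z,A')\bigr)_f
\]
that is an isomorphism for $p=0$. By Lemma~\ref{lemma_(u_i) is regular and free}, $A'$ is free as a $W$-module, so $\text{Tor}^{-p,*}_W(\Z,A')$ vanishes for $p>0$ and equals $A=A'/\bar W A'$ at $p=0$. Consequently $(E^{-p,q}_2)_f=0$ for $p\neq0$ and $(E^{0,q}_2)_f\cong A^q$.

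Finally, I propagate to the $E_\infty$-page. Since the spectral sequence lives in $p\geq0$, no differential leaves the column $p=0$; the differentials $d_r\colon E^{-r,q+r-1}_r\to E^{0,q}_r$ enter from columns whose $E_2$-entries are torsion, and subquotients of torsion modules stay torsion, so their images in $E^{0,q}_r$ are torsion. Thus $(E^{0,q}_\infty)_f=(E^{0,q}_2)_f\cong A^q$, and the same subquotient argument column-by-column gives $(E^{-p,q}_\infty)_f=0$ for $p\neq 0$. The hardest step will be matching the geometric $W$-module structure on $H^*_f(E)$ induced by $\delta^*$ with the algebraic structure $\delta(w_{ij})=x'_{ij}-x'_{i1}$, so that the Tor computation over $W$ is the correct one to feed into Lemma~\ref{lemma_free E2}; once this identification is secured, Tor-vanishing from freeness and the standard edge-column degeneracy of the Eilenberg--Moore spectral sequence deliver the lemma. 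If the statement $E^{-p,q}_\infty=0$ is read literally rather than modulo torsion, an extra argument would be needed to show that the residual torsion in $E^{-p,q}_2$ for $p\neq0$ is killed by differentials; but the torsion-free version is exactly what is required downstream to deduce $H^*(X_A)/T\cong A$.
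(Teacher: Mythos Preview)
Your proposal is correct and follows essentially the same route as the paper: compute $(E_2)_f$ via the monomorphism of Lemma~\ref{lemma_free E2} (the paper cites Lemma~\ref{lemma_free E2_alg} directly, but this is the same content), invoke the freeness of $A'$ over $W$ from Lemma~\ref{lemma_(u_i) is regular and free} to make the Tor vanish off the $p=0$ column, and then propagate to $E_\infty$ by noting that differentials into or out of the torsion-free part must be trivial since every other column is torsion and subquotients of torsion remain torsion. Your closing observation is also on point: the paper's own argument, like yours, only establishes $(E^{-p,q}_\infty)_f=0$ for $p\neq 0$ rather than the literal vanishing asserted in the statement, and the torsion-free conclusion is exactly what is invoked in Lemmas~\ref{lemma_H(XS)=S as mod} and~\ref{lemma_extension}.
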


\begin{proof}
The $E_2$-page is given by $E^{-p,*}_2=\text{Tor}^{-p,*}_{H^*(\prod_{(ij)\in\bar{\Omega}}X_{ij})}(H^*((\underline{X}+\underline{Y},\ast)^{K}),\Z)$. By Lemma~\ref{lemma_free E2_alg}, there is a monomorphism
\[
\pi':(E^{-p,*}_2)_f\longrightarrow\left(\text{Tor}^{-p,*}_{H^*_f(\prod_{(ij)\in\bar{\Omega}}X_{ij})}(H^*_f((\underline{X}+\underline{Y},\ast)^{K}),\Z)\right)_f,
\]
which is an isomorphism for $p=0$. By Lemmas~\ref{lemma_free Kunneth} and~\ref{lemma_free SR ring}, $H^*_f((\underline{X}+\underline{Y},\ast)^{K})\cong A'$ and
\[
H^*_f(\prod_{(ij)\in\bar{\Omega}}X_{ij})
\cong\Z[w_{12},\ldots,w_{1a_1},w_{22},\ldots,w_{2a_2},\ldots,w_{m2},\ldots,w_{ma_m}].
\]
Denote $H^*_f(\prod_{(ij)\in\bar{\Omega}}X_{ij})$ by $W$. So $A'$ is a $W$-module via $\delta^*$. By Lemma~\ref{lemma_(u_i) is regular and free} $A'$ is a free~$W$-module, so
\[
\text{Tor}^{-p,q}_{W}(A',\Z)\cong\begin{cases}
A^q	&p=0\\
0	&\text{otherwise}.
\end{cases}
\]
It follows that $(E^{-p,q}_2)_f$ is $A^q$ for $p=0$ and is zero otherwise.

Suppose $(E^{-p,q}_r)_f$ is $A^q$ for $p=0$ and is zero otherwise. Since $(E^{-p,*}_r)_f$ is concentrated in the column $p=0$, any differentials $d_r$ in and out of torsion-free elements is trivial. So we have $\text{ker}(d_r)_f=(E^{-p,q}_r)_f$ and $\text{Im}(d_r)_f=0$. By Lemma~\ref{lemma_free SES}, $(E^{-p,q}_{r+1})_f\cong(E^{-p,q}_r)_f$. Therefore~$(E^{-p,q}_{\infty})_f$ is isomorphic to $A^q$ for $p=0$ and is zero otherwise.
\end{proof}

\begin{lemma}\label{lemma_H(XS)=S as mod}
There is an additive isomorphism $H^q_f(X_A)\cong A^q$.
\end{lemma}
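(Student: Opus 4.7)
My plan is to read off the isomorphism from the Eilenberg-Moore spectral sequence of the fibration (\ref{fib_polarization}), whose $E_\infty$ page was described in Lemma \ref{lemma_E_infty}. Since both $(\underline{X}+\underline{Y},\ast)^{K}$ and $\prod_{(i,j)\in\bar{\Omega}}X_{ij}$ are simply connected, the spectral sequence strongly converges to $H^*(X_A)$, and for each degree $q$ this yields a filtration
\[
0 = F_{-1} \subseteq F_0 \subseteq F_1 \subseteq \cdots \subseteq H^q(X_A)
\]
with $F_p/F_{p-1} \cong E_\infty^{-p,q+p}$. Because $H^q(X_A)$ is finitely generated, the filtration stabilizes at $H^q(X_A)$ for sufficiently large $p$.

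By Lemma \ref{lemma_E_infty}, $(F_0)_f \cong (E_\infty^{0,q})_f \cong A^q$, while for every $p \geq 1$ the successive quotient $F_p/F_{p-1}$ has vanishing torsion-free part, i.e.~is a torsion module. The key step is to verify that each inclusion $F_{p-1} \hookrightarrow F_p$ induces an injection $(F_{p-1})_f \hookrightarrow (F_p)_f$ with torsion cokernel. Injectivity is routine: if $x \in F_{p-1}$ becomes torsion in $F_p$, then some multiple $cx$ vanishes in $F_p$, hence in $F_{p-1}$, so $x$ was already torsion. For the cokernel, any $y \in F_p$ has some multiple $cy$ in $F_{p-1}$ because $F_p/F_{p-1}$ is torsion. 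Both claims can also be extracted from Lemma \ref{lemma_free SES} applied to $0 \to F_{p-1} \to F_p \to F_p/F_{p-1} \to 0$ together with the fact that the right-hand term has trivial torsion-free part.

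Composing finitely many such inclusions gives an injection $(F_0)_f \hookrightarrow H^q_f(X_A)$ with torsion cokernel. Since both sides are finitely generated torsion-free $\Z$-modules of the same rank, they are isomorphic as modules, and combined with $(F_0)_f \cong A^q$ this yields the required isomorphism $H^q_f(X_A) \cong A^q$.

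The main obstacle is conceptual rather than computational: Lemma \ref{lemma_E_infty} only annihilates $E_\infty^{-p,\ast}$ for $p \neq 0$ after quotienting by torsion rather than on the nose, so the spectral sequence is not literally concentrated in the column $p=0$, and the filtration extensions could a priori contribute new torsion classes to $H^q(X_A)$. The argument above is designed precisely to show that such torsion cannot affect the torsion-free quotient. The finite-generation coming from the finite-type convention of Section~2 is what finally upgrades ``injection with torsion cokernel'' into an abstract module isomorphism in the last step.
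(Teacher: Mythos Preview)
Your proof is correct and follows essentially the same route as the paper: both arguments use the filtration coming from strong convergence of the Eilenberg--Moore spectral sequence, invoke Lemma~\ref{lemma_E_infty} to see that the associated graded pieces are torsion away from $p=0$, and appeal to Lemma~\ref{lemma_free SES} to control the torsion-free quotients along the filtration. The paper's write-up is terser---it jumps directly from $(E^{-p,p+q}_{\infty})_f\cong\bigl((\mathscr{F}^{-p})_f/(\mathscr{F}^{-p+1})_f\bigr)_f$ to $H^q_f(X_A)\cong(E^{0,q}_{\infty})_f$---whereas you spell out explicitly the inductive step that each $(F_{p-1})_f\hookrightarrow (F_p)_f$ is an injection with torsion cokernel and then use finite generation to upgrade this to an abstract module isomorphism; this is exactly the content the paper suppresses.
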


\begin{proof}
Since Eilenberg-Moore spectral sequence strongly converges to $H^*(X_A)$, for any fixed~$q$ there is a decreasing filtration $\{\mathscr{F}^{-p}\}$ of $H^q(X_A)$ such that
\[
\begin{array}{c c c}
\mathscr{F}^{-\infty}=H^{q}(X_A),
&\mathscr{F}^{1}=0,
&E^{-p,p+q}_{\infty}\cong\mathscr{F}^{-p}/\mathscr{F}^{-p+1}.
\end{array}
\]
By Lemma~\ref{lemma_free SES} $(E^{-p,p+q}_{\infty})_f\cong\left((\mathscr{F}^{-p})_f/(\mathscr{F}^{-p+1})_f\right)_f$. By Lemma~\ref{lemma_E_infty} $(E^{-p,p+q}_{\infty})_f$ is zero unless~$p=0$, so $H^q_f(X_A)\cong(E^{0,q}_{\infty})_f\cong A^q$ as modules. 
\end{proof}

Before going to the extension problem of the $E_{\infty}$-page, we consider the special case where all of the even degree generators of $A$ are in degree 2. The following theorem refines Lemma~\ref{lemma_H(XS)=S as mod} and shows that $H^*(X_A)\cong A$ as algebras without modding out the cohomology by torsion. This generalizes the results of Bahri-Bendersky-Cohen-Gitler~\cite[Theorem 2.2]{BBCG2} and Trevisan~\cite[Theorem 3.6]{trevisan}.

\begin{thm}\label{cor_deg 2 and odd deg}
Let $A$ be a graded monomial ideal ring where its generators have either degree~2 or arbitrary positive odd degrees. Then $H^*(X_A)\cong A$ as algebras.
\end{thm}

\begin{proof}
The $E_2$-page is given by $E^{-p,*}_2=\text{Tor}^{-p,*}_{H^*(\prod_{(ij)\in\bar{\Omega}}X_{ij})}(H^*((\underline{X}+\underline{Y},\ast)^{K}),\Z)$. By hypothesis, $X_{ij}=\C\PP^{\infty}$ for $(i,j)\in\Omega$ and $H^*(\prod_{(ij)\in\bar{\Omega}}X_{ij})$ and $H^*((\underline{X}+\underline{Y},\ast)^{K}$ are free. Following the argument in the proof of Lemma~\ref{lemma_E_infty}, $E^{-p,q}_2$ is $A^q$ for $p=0$ and is zero otherwise. Since the $E_2$-page is concentrated in the column $p=0$, the spectral sequence collapses and~$H^*(X_A)\cong A$ as modules.

Let $\phi:X_A\to(\underline{X}+\underline{Y},\ast)^K$ be the fiber inclusion. Lemma~\ref{lemma_edge homomorphism} implies the commutative diagram
\[
\xymatrix{
A'\ar[r]^-{\cong}\ar[d]^-{e}	&H^*((\underline{X}+\underline{Y},\ast)^K)\ar[d]^-{\phi^*}\\
E^{*,*}_2\ar[r]^-{\cong}		&H^*(X_A)
}
\]
where $e$ is surjective. Since $\phi^*$ is surjective and multiplicative and its kernel is $W$, $H^*(X_A)\cong A'/W\cong A$ as algebras.
\end{proof}

\section{The extension problem}
In this section we continue using Notation~\ref{notation_eilenberg-moore for X_A} in Section 4. Lemma~\ref{lemma_H(XS)=S as mod} shows that $H^*_f(X_A)$ and $A$ are free $\Z$-modules of same rank at each degree. We claim that they are isomorphic as algebras. The idea is to construct a space $Z_A$ related to $X_A$ such that $H^*(Z_A)$ is free and computable. Then we define a map $g_A:Z_A\to X_A$ and compare $H^*(X_A)$ with $H^*(Z_A)$ via~$g^*_A$.

\subsubsection*{Construction of $Z_A$}
For $1\leq i\leq m$ let $|x_i|=2c_i$, and let
\begin{eqnarray*}
\underline{Z}
&=&\{Z_{ij}=(\C\PP^{\infty})^{c_i}|(i,j)\in\Omega\}\\
&=&\{\underbrace{(\C\PP^{\infty})^{c_1},\ldots,(\C\PP^{\infty})^{c_1}}_{a_1},\underbrace{(\C\PP^{\infty})^{c_2},\ldots,(\C\PP^{\infty})^{c_2}}_{a_2},\ldots, \underbrace{(\C\PP^{\infty})^{c_m},\ldots,(\C\PP^{\infty})^{c_m}}_{a_m}\}
\end{eqnarray*}
and construct polyhedral product $(\underline{Z}+\underline{Y},\ast)^{K}$. Fix a generator $z$ of $H^2(\C\PP^{\infty})$. For~\mbox{$(i,j)\in\Omega$} and $1\leq k\leq c_i$, let $\pi_{ijk}:Z_{ij}\to\C\PP^{\infty}$ be the projection onto the $k^{\text{th}}$ copy of $\C\PP^{\infty}$ and let~$z_{ijk}=\pi^*_{ijk}(z)$. By Theorem~\ref{prop_BBCG} we have
\[
H^*((\underline{Z}+\underline{Y},\ast)^{K})\cong Q'/L',
\]
where $Q'=\Z[z_{ijk}|(i,j)\in\Omega,1\leq k\leq c_i]\otimes\Lambda[y_1,\ldots,y_n]$ and $L'$ is the ideal generated by monomials
\[
z_{i_1j_1k_1}\cdots z_{i_tj_tk_t}\otimes y_{l_1}\cdots y_{l_{\tau}}
\]
for $\{j_1+\sum^{i_1-1}_{s=1}a_s,\ldots,j_t+\sum^{i_t-1}_{s=1}a_s\}+\{l_1,\ldots,l_{\tau}\}\notin K$. For $1\leq i\leq m$, define
\[
\tilde{\delta}_i:\prod^{a_i}_{j=1}Z_{ij}\to\prod^{a_i}_{j=2}Z_{ij},\quad
\tilde{\delta}_i(u_1,\ldots,u_{a_i})=(u_2\cdot u_1^{-1},\ldots,u_{a_i}\cdot u_1^{-1}),
\]
and define $\tilde{\delta}:(\underline{Z}+\underline{Y},\ast)^{K}\to\prod_{(i,j)\in\bar{\Omega}}Z_{ij}$ to be the composite
\[
\tilde{\delta}:(\underline{Z}+\underline{Y},\ast)^{K}\hookrightarrow\prod_{(i,j)\in\Omega}Z_{ij}\times\prod^n_{k=1}Y_k\overset{\text{proj}}{\longrightarrow}\prod_{(i,j)\in\Omega}Z_{ij}\overset{\prod^m_{i=1}\tilde{\delta}_i}{\longrightarrow}\prod_{(i,j)\in\bar{\Omega}}Z_{ij}.
\]

\begin{lemma}\label{lemma_Z^K cohmlgy}
Let $Z_A$ be the fiber of $\delta'$. Then $H^*(Z_A)\cong Q/L$, where
\[
Q=\Z[z_{ik}|1\leq i\leq m, 1\leq k\leq c_i]\otimes\Lambda[y_1,\ldots,y_n]
\]
with $|z_{ik}|=2$ and $L$ is generated by monomials $z_{i_1k_1}\cdots z_{i_Nk_N}\otimes y^{b_{1j}}_1\cdots y^{b_{nj}}_n$ satisfying
\[
\begin{array}{c c c c}
1\leq j\leq r,
&1\leq k_l\leq c_{i_l}
&\text{and}
&(i_1,\ldots,i_N)=(\underbrace{1,\ldots,1}_{a_{1j}},\underbrace{2,\ldots,2}_{a_{2j}},\ldots,\underbrace{m,\ldots,m}_{a_{mj}}).
\end{array}
\]
\end{lemma}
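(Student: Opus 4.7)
The plan is to mimic the proof of Lemma~\ref{lemma_H(XS)=S as mod} but in the present situation where everything is free, so that the Eilenberg-Moore spectral sequence collapses on the nose rather than only modulo torsion. Apply the Eilenberg-Moore spectral sequence to the fibration sequence
\[
Z_A\longrightarrow(\underline{Z}+\underline{Y},\ast)^{K}\overset{\tilde\delta}{\longrightarrow}\prod_{(i,j)\in\bar\Omega}Z_{ij}.
\]
By Theorem~\ref{prop_BBCG} the cohomology $H^*((\underline{Z}+\underline{Y},\ast)^{K})\cong Q'/L'$ is free abelian, and the base is a product of copies of $\C\PP^\infty$, so $W':=H^*\!\left(\prod_{(i,j)\in\bar\Omega}Z_{ij}\right)$ is a polynomial ring $\Z[w_{ijk}\mid (i,j)\in\bar\Omega,\ 1\leq k\leq c_i]$. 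The $E_2$-page reads
\[
E_2^{-p,*}=\mathrm{Tor}^{-p,*}_{W'}\!\left(Q'/L',\Z\right).
\]

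The first key step is to identify the $W'$-module structure on $Q'/L'$ coming from $\tilde\delta^*$. Using the standard $H$-space structure on $\C\PP^\infty$ (so that $(u\cdot v^{-1})^*(z)=z\otimes 1-1\otimes z$), $\tilde\delta^*$ sends the generator of $W'$ labelled by $(i,j,k)$ to $z'_{ijk}-z'_{i1k}$. The second and principal step is to show that this sequence of differences forms a $(Q'/L')$-regular sequence, so that $Q'/L'$ is a free $W'$-module with $\mathrm{Tor}^0_{W'}(Q'/L',\Z)\cong (Q'/L')/(z'_{ijk}-z'_{i1k})\cong Q/L$ and $\mathrm{Tor}^{-p}_{W'}(Q'/L',\Z)=0$ for $p>0$. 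This is a direct analogue of Lemma~\ref{lemma_(u_i) is regular and free}: I would order the pairs $(i,j,k)$ lexicographically, quotient successively, and at each stage identify the quotient with an honest graded monomial ideal ring on fewer variables, so that the combinatorial argument of Fr\"oberg cited in the proof of Lemma~\ref{lemma_(u_i) is regular and free} applies verbatim. This regularity statement is exactly where I expect the real work to be; the bookkeeping needed to see that each successive quotient is again a (square-free) monomial ring of the right shape is the main obstacle, but the combinatorics are essentially the same as before.

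Once regularity is in hand, the $E_2$-page is concentrated in the column $p=0$ and equals $Q/L$, so the spectral sequence collapses with no differentials. Finally, since both $Q/L$ and all $E_\infty$-terms are torsion-free and free as abelian groups, there is no additive or multiplicative extension problem; the edge homomorphism $H^*(Z_A)\twoheadrightarrow E_\infty^{0,*}\cong Q/L$ is an isomorphism of graded rings, yielding $H^*(Z_A)\cong Q/L$ as claimed.
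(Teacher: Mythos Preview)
Your proposal is correct and follows essentially the same route as the paper: apply the Eilenberg--Moore spectral sequence to the fibration, identify $\tilde\delta^*$ on generators, invoke freeness of $Q'/L'$ over the polynomial base to collapse the spectral sequence at $E_2$ in the column $p=0$, and read off $H^*(Z_A)\cong Q/L$ as algebras. The one place where you are slightly more cautious than the paper is the regularity step: the paper simply cites Lemma~\ref{lemma_(u_i) is regular and free} directly, whereas you propose to rerun the Fr\"oberg argument with the triple index $(i,j,k)$; in fact, after relabeling the ``first index'' as the pair $(i,k)$ (with $a_{(i,k)}=a_i$) the situation is literally an instance of Lemma~\ref{lemma_(u_i) is regular and free}, so no new work is needed.
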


\begin{proof}
Apply Eilenberg-Moore spectral sequence to fibration sequence
\[
Z_A\longrightarrow(\underline{Z}+\underline{Y},\ast)^{K}\overset{\tilde{\delta}}{\longrightarrow}\prod_{(i,j)\in\bar{\Omega}}Z_{ij}.
\]
The $E_2$-page is given by $\tilde{E}^{-p,*}_2=\text{Tor}^{-p,*}_{H^*(\prod_{(i,j)\in\bar{\Omega}}Z_{ij})}(\Z, H^*((\underline{Z}+\underline{Y},\ast)^{K}))$. By K\"{u}nneth Theorem
\[
H^*(\prod_{(i,j)\in\bar{\Omega}}Z_{ij})
\cong\Z[v_{ijk}|(i,j)\in\bar{\Omega},1\leq k\leq c_i]
\]
where $|v_{ijk}|=2$. Denoted $H^*(\prod_{(i,j)\in\bar{\Omega}}Z_{ij})$ by $V$. By definition $\tilde{\delta}^*(v_{ijk})=z_{ijk}-z_{i1k}$. This gives an action of $V$ on $Q'$. By Lemma~\ref{lemma_(u_i) is regular and free} $Q'/L'$ is a free $V$-module, so
\[
\text{Tor}^{-p,*}_{V}(Q'/L',\Z)=\begin{cases}
(Q'/L')/(z_{ijk}-z_{ilk})	&p=0\\
0							&\text{otherwise.}
\end{cases}
\]
Modding out $(z_{ijk}-z_{ilk})$ identifies $z_{ijk}$ with $z_{ilk}$ in $Q'/L'$, so $(Q'/L')/(z_{ijk}-z_{ilk})\cong Q/L$. Since the $E_2$-page is concentrated in the column $p=0$, $H^*(Z_A)\cong Q/L$.

Lemma~\ref{lemma_edge homomorphism} implies a commutative diagram
\[
\xymatrix{
Q'\ar[r]^-{\cong}\ar[d]^-{e}	&H^*((\underline{Z}+\underline{Y})^K)\ar[d]^-{\phi^*}\\
E^{*,*}_2\ar[r]^-{\cong}		&H^*(Z_A)
}
\]
where $e$ is surjective and $\phi^*$ is induced by the fiber inclusion $\phi:Z_A\to(\underline{Z}+\underline{Y})^K$. This implies $\phi^*$ is surjective. Since $\phi^*$ is multiplicative, $H^*(Z_A)\cong Q/L$ as algebras.
\end{proof}


\subsubsection*{Construction of $g_A$}

Fix a generator $z\in H^2(\C\PP^{\infty})$. For $1\leq j\leq c_i$, let $\pi_j:(\C\PP^{\infty})^{c_i}\to\C\PP^{\infty}$ be the projection onto the $j^{\text{th}}$ copy of $\C\PP^{\infty}$ and let $z_{j}=\pi^*_j(z)$. For $1\leq i\leq m$, take a map~$g_i:(\C\PP^{\infty})^{c_i}\to K(\Z,2c_i)$ that represents the cocycle class $z_{1}\cdots z_{c_i}\in H^{2c_i}((\C\PP^{\infty})^{c_i})$. For $(i,j)\in\Omega$, let $g_{ij}:Z_{ij}\to X_{ij}$ be $g_i$, and for $1\leq k\leq n$, let $h_k:Y_k\to Y_k$ be the identity map. Then $\{g_{ij},h_k|(i,j)\in\Omega,1\leq k\leq n\}$ induces a map $g_{K}:(\underline{Z}+\underline{Y},\ast)^{K}\to(\underline{X}+\underline{Y},\ast)^{K}$ by the functoriality of polyhedral products.

\begin{lemma}\label{lemma_g_A send generators}
Let $\{x_{ij},y_k|(i,j)\in\Omega,1\leq k\leq n\}$ be generators of $H^*_f((\underline{X}+\underline{Y},\ast)^K)\cong P'/I'$ and $\{z_{ijl},y'_k|(i,j)\in\Omega,1\leq l\leq c_i,1\leq k\leq n\}$ be generators of $H^*((\underline{Z}+\underline{Y},\ast)^K)\cong Q'/L'$. Then $(g^*_{K})_f(x_{ij})=\prod^{c_i}_{l=1}z_{ijl}$ and  $(g^*_{K})_f(y_k)=y'_k$.
\end{lemma}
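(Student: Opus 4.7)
The plan is to reduce the computation on the polyhedral products to a computation on the ambient full products, where naturality of K\"unneth and the defining property of $g_i$ make the identification explicit.

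First I would record the naturality square of polyhedral products. Let $\imath_X$ and $\imath_Z$ denote the inclusions $(\underline{X}+\underline{Y},\ast)^{K}\hookrightarrow\prod_{(i,j)\in\Omega}X_{ij}\times\prod_{k=1}^n Y_k$ and $(\underline{Z}+\underline{Y},\ast)^{K}\hookrightarrow\prod_{(i,j)\in\Omega}Z_{ij}\times\prod_{k=1}^n Y_k$, and let $G=\prod_{(i,j)\in\Omega}g_{ij}\times\prod_{k=1}^n h_k$. By functoriality of the polyhedral product, $\imath_X\circ g_{K}=G\circ\imath_Z$, so passing to cohomology and then to torsion-free quotients (using Lemma~\ref{lemma_free Kunneth}) yields the commutative square
\[
\xymatrix{
\bigotimes_{(i,j)}H^*_f(X_{ij})\otimes\bigotimes_kH^*_f(Y_k)\ar[r]^-{\imath^*_{X,f}}\ar[d]^-{G^*_f}	&H^*_f((\underline{X}+\underline{Y},\ast)^{K})\ar[d]^-{(g^*_{K})_f}\\
\bigotimes_{(i,j)}H^*_f(Z_{ij})\otimes\bigotimes_kH^*_f(Y_k)\ar[r]^-{\imath^*_{Z,f}}	&H^*_f((\underline{Z}+\underline{Y},\ast)^{K})
}
\]
in which the horizontal arrows are surjective by Proposition~\ref{lemma_free SR ring}.

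Next I would identify the generators on the top-left and bottom-left. By Lemma~\ref{lemma_free Kunneth}, the element $x'_{ij}$ is the image under $\imath^*_{X,f}$ of $\pi^*_{X_{ij}}(x_i)$, where $\pi_{X_{ij}}$ is projection onto the factor $X_{ij}=K(\Z,2c_i)$ and $x_i\in H^{2c_i}(K(\Z,2c_i))$ is the fundamental class; similarly $y_k$ is the image of $\pi^*_{Y_k}(y_k)$. On the bottom, $\prod_{l=1}^{c_i}z'_{ijl}$ is the image under $\imath^*_{Z,f}$ of $\pi^*_{Z_{ij}}(z_1\cdots z_{c_i})$.

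The main input is the defining property of $g_i$: since $g_i:(\C\PP^\infty)^{c_i}\to K(\Z,2c_i)$ represents the cohomology class $z_1\cdots z_{c_i}$, we have $g_i^*(x_i)=z_1\cdots z_{c_i}$. Combined with $h_k^*=\mathrm{id}$, naturality of K\"unneth then gives
\[
G^*\bigl(\pi^*_{X_{ij}}(x_i)\bigr)=\pi^*_{Z_{ij}}(z_1\cdots z_{c_i}),
\qquad
G^*\bigl(\pi^*_{Y_k}(y_k)\bigr)=\pi^*_{Y_k}(y_k).
\]
Chasing these equalities around the commutative square above (and using surjectivity of $\imath^*_{Z,f}$) produces the asserted identities $(g^*_{K})_f(x'_{ij})=\prod_{l=1}^{c_i}z'_{ijl}$ and $(g^*_{K})_f(y_k)=y_k$.

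The only delicate point is bookkeeping: one must be careful that taking torsion-free quotients commutes with tensor products and with the K\"unneth identification, which is exactly the content of Lemmas~\ref{lemma_tensor product free} and \ref{lemma_free Kunneth}. Once the above diagram is in place the argument is essentially a diagram chase, so I expect no genuine obstacle beyond keeping the indexing consistent.
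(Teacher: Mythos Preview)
Your proposal is correct and follows essentially the same approach as the paper: both use the naturality square coming from functoriality of the polyhedral product with respect to the inclusion into the full product, identify the generators as images from the ambient product via Proposition~\ref{lemma_free SR ring} and Lemma~\ref{lemma_free Kunneth}, and then invoke the defining property $g_i^*(x_i)=z_1\cdots z_{c_i}$ to conclude. The only cosmetic difference is that the paper writes the upper-left and lower-left corners directly as $P'$ and $Q'$ rather than as tensor products, but this is the same identification.
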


\begin{proof}
There is a commutative diagram
\[
\xymatrix{
(\underline{Z}+\underline{Y},\ast)^{K}\ar[r]^-{\jmath}\ar[d]^-{g_{K}}	&\prod_{(i,j)\in\Omega}Z_{ij}\times\prod^n_{k=1}Y_k\ar[d]^-{g}\\
(\underline{X}+\underline{Y},\ast)^{K}\ar[r]^-{\imath}					&\prod_{(i,j)\in\Omega}X_{ij}\times\prod^n_{k=1}Y_k
}
\]
where $\imath$ and $\jmath$ are inclusions, $g=\prod_{(i,j)\in\Omega}g_{ij}\times\prod^n_{k=1}h_k$. Taking cohomology and modding out torsion elements, we obtain the commutative diagram
\[
\xymatrix{
P'\ar[r]^-{\imath^*_f}\ar[d]_-{g^*_f}	&P'/I'\ar[d]^-{(g^*_{K})_f}\\
Q'\ar[r]^-{\jmath^*}					&Q'/L'
}
\]
where $\imath^*_f$ and $\jmath^*$ are the quotient maps. Let $\tilde{x}_{ij},\tilde{y}_k\in P'$ and $\tilde{z}_{ijl},\tilde{y}'_k\in Q'$ be generators such that $\imath^*_f(\tilde{x}_{ij})=x_{ij}$, $\imath^*_f(\tilde{y}_k)=y_k$, $\jmath^*_f(\tilde{y}'_k)=y'_k$ and $\jmath^*_f(\tilde{z}_{ijl})=z_{ijl}$. By construction $g^*_f(\tilde{x}_{ij})=\prod^{c_i}_{l=1}\tilde{z}_{ijl}$ and $g^*_f(\tilde{y}_k)=\tilde{y}'_k$, so we have $(g^*_{K})_f(x_{ij})=\prod^{c_i}_{l=1}z_{ijl}$ and $(g^*_{K})_f(y_k)=y'_k$.
\end{proof}

\begin{lemma}\label{lemma_f_S map}
There is a map $g_A:Z_A\to X_A$ making the following diagram commutes
\[
\xymatrix{
Z_A\ar[d]^-{g_A}\ar[r]	&(\underline{Z}+\underline{Y},\ast)^{K}\ar[d]^-{g_{K}}\\
X_A\ar[r]				&(\underline{X}+\underline{Y}, \ast)^{K}
}\]
where the horizontal maps are the inclusion maps.
\end{lemma}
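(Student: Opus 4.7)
The plan is to define $g_A$ simply as the restriction of $g_{K}$ to the subspace $Z_A \subset (\underline{Z}+\underline{Y},\ast)^{K}$, after checking that $g_{K}$ carries $Z_A$ into $X_A \subset (\underline{X}+\underline{Y},\ast)^{K}$. This reduces to verifying that the composite $\delta \circ g_{K}$ sends every point of $Z_A$ to the basepoint, which is a direct coordinate calculation.

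First I would unpack what it means for a point $z\in(\underline{Z}+\underline{Y},\ast)^{K}$ to lie in $Z_A=\tilde{\delta}^{-1}(\ast)$. Writing $z=((v_{ij})_{(i,j)\in\Omega},(w_k)_{k=1}^n)$, the condition $\tilde{\delta}(z)=\ast$ translates, via the definition of $\tilde{\delta}_i$, to $v_{ij}\cdot v_{i1}^{-1}=\ast$ in the abelian H-space $Z_{ij}=(\C\PP^{\infty})^{c_i}$; equivalently $v_{ij}=v_{i1}$ for every $(i,j)\in\bar{\Omega}$. Next, since $g_{ij}=g_i$ depends only on $i$ and each $h_k$ is the identity, we have $g_{K}(z)=((g_i(v_{ij})),(w_k))$. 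The $(i,j)$-coordinate of $\delta(g_{K}(z))$ for $(i,j)\in\bar{\Omega}$ is therefore $g_i(v_{ij})\cdot g_i(v_{i1})^{-1}$, which collapses to $\ast$ because $v_{ij}=v_{i1}$ forces $g_i(v_{ij})=g_i(v_{i1})$. Hence $g_{K}(Z_A)\subset\delta^{-1}(\ast)=X_A$, and we set $g_A:=g_{K}|_{Z_A}$; the required square then commutes tautologically since $g_A$ is literally the restriction of $g_K$ along the inclusion $Z_A\hookrightarrow(\underline{Z}+\underline{Y},\ast)^K$.

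There is no real obstacle in this approach, but it is worth flagging a tempting alternative that fails. One might hope to build a strictly commutative square of fibrations by taking the base map $\bar{g}=\prod g_i:\prod_{(i,j)\in\bar{\Omega}}Z_{ij}\to\prod_{(i,j)\in\bar{\Omega}}X_{ij}$ and then inducing a map on fibers by functoriality. However, strict commutativity would require $g_i$ to be an H-map, i.e.\ $g_i(u\cdot v)=g_i(u)\cdot g_i(v)$, and this fails: the class $z_1\cdots z_{c_i}$ representing $g_i$ is a cup product, and so it is not primitive under the coordinatewise H-space structure on $(\C\PP^{\infty})^{c_i}$. The restriction construction bypasses this issue entirely, because on the fiber $Z_A$ the coordinates $v_{ij}$ are forced to coincide with $v_{i1}$, so the nonlinearity of $g_i$ never intervenes and the image lands cleanly in $X_A$.
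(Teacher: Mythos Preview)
Your argument is correct and reaches the same conclusion as the paper by the same underlying mechanism: both show that the composite $\delta\circ g_{K}$ is \emph{strictly} constant on $Z_A$, so that $g_A$ may be taken to be the literal restriction $g_{K}|_{Z_A}$. The difference is purely one of presentation. You verify this by a direct coordinate computation (if $v_{ij}=v_{i1}$ then $g_i(v_{ij})\cdot g_i(v_{i1})^{-1}=\ast$), whereas the paper packages the same fact into a diagram: it identifies the fiber of $\tilde{\delta}'$ on the ambient product with $\prod_i(\C\PP^{\infty})^{c_i}\times\prod_k Y_k$ via the diagonal, and then observes that the resulting composite factors through two consecutive maps of the fibration sequence $\prod_i K(\Z,|x_i|)\times\prod_k Y_k\xrightarrow{\triangle\times h}\prod_{\Omega}X_{ij}\times\prod_k Y_k\xrightarrow{\delta'}\prod_{\bar{\Omega}}X_{ij}$, hence is constant. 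Your pointwise check is more elementary and transparent; the paper's diagrammatic formulation has the mild advantage that it makes the identification of the fiber of $\tilde{\delta}$ explicit, which could be reused elsewhere. You also correctly flag the same obstruction the paper does at the outset---that the square over the base spaces fails to commute because $g_i$ is not an H-map---so your write-up is in fact slightly more informative about \emph{why} the naive approach breaks.
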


\begin{proof}
One may want to construct $g_A$ by showing the diagram
\[
\xymatrix{
(\underline{Z}+\underline{Y},\ast)^{K}\ar[r]^-{\tilde{\delta}}\ar[d]^-{g_{K}}	&\prod_{(i,j)\in\bar{\Omega}}Z_{ij}\ar[d]^-{\prod_{(i,j)\in\bar{\Omega}} g_{ij}}\\
(\underline{X}+\underline{Y},\ast)^{K}\ar[r]^-{\delta}					&\prod_{(i,j)\in\bar{\Omega}}X_{ij}
}
\]
commutes. However, as $(\prod_{(i,j)\in\bar{\Omega}}g_{ij})\circ\bar{\delta}$ and $\delta\circ g_{K}$ induce different morphisms on cohomology, the diagram cannot commute. Instead, we show that the composite
\[
Z_A\longrightarrow(\underline{Z}+\underline{Y},\ast)^{K}\overset{g_{K}}{\longrightarrow}(\underline{X}+\underline{Y},\ast)^{K}\overset{\delta}{\longrightarrow}\prod_{(i,j)\in\bar{\Omega}}X_{ij}
\]
is trivial. If so, there will exist a map $g_A:Z_A\to X_A$ as asserted since $X_A$ is the fiber of $\delta$.

By definition of $\bar{\delta}$ there is a commutative diagram
\[
\xymatrix{
(\underline{Z}+\underline{Y},\ast)^{K}\ar[r]^-{\tilde{\delta}}\ar[d]^-{\jmath}	&\prod_{(i,j)\in\bar{\Omega}}Z_{ij}\ar@{=}\\
\prod_{(i,j)\in\Omega}Z_{ij}\times\prod^n_{k=1}Y_k\ar[r]^-{\text{proj}}	&\prod_{(i,j)\in\Omega}Z_{ij}\ar[u]_-{\prod^m_{i=1}\tilde{\delta}_i}
}
\]
where $\jmath$ is the inclusion. Denote $(\prod^m_{i=1}\tilde{\delta}_i)\circ\text{proj}$ by $\tilde{\delta}'$ and extend the diagram to
\[
\xymatrix{
Z_A\ar[r]\ar[d]^-{e}	&(\underline{Z}+\underline{Y},\ast)^{K}\ar[r]^-{\tilde{\delta}}\ar[d]^-{\jmath}	&\prod_{(i,j)\in\bar{\Omega}}Z_{ij}\ar@{=}[d]\\
\prod^m_{i=1}(\C\PP^{\infty})^{c_i}\times\prod^n_{k=1}Y_k\ar[r]^-{\triangle'\times h}	&\prod_{(i,j)\in\Omega}Z_{ij}\times\prod^n_{k=1}Y_k\ar[r]^-{\tilde{\delta}'}	&\prod_{(i,j)\in\bar{\Omega}}Z_{ij}\\
}
\]
where $\triangle':\prod^m_{i=1}(\C\PP^{\infty})^{c_i}\to\prod^{a_i}_{j=1}Z_{ij}$ is the diagonal map, $h:\prod^n_{k=1}Y_k\to\prod^n_{k=1}Y_k$ is the identity map, and $e$ is an induced map. The top and the bottom row are fibration sequences. The left square fits into the following commutative diagram
\[
\xymatrix{
Z_A\ar[r]\ar[d]^-{e}	&(\underline{Z}+\underline{Y}, \ast)^{K}\ar[r]^-{g_{K}}\ar[d]^-{\jmath}	&(\underline{X}+\underline{Y},\ast)^{K}\ar[r]^-{\delta}\ar[d]^-{\imath}	&\prod_{\bar{\Omega}}X_{ij}\ar@{=}[d]\\
\prod^m_{i=1}(\C\PP^{\infty})^{c_i}\times\prod^n_{j=1}Y_j\ar[r]^-{\triangle'\times h}\ar[dr]_-{\prod g_i\times h}	&\prod_{\Omega}Z_{ij}\times\prod^n_{j=1}Y_j\ar[r]^-{\prod g_{ij}\times h}	&\prod_{\Omega}X_{ij}\times\prod^n_{k=1}Y_k\ar[r]^-{\delta'}	&\prod_{\bar{\Omega}}X_{ij}\\
	&\prod^m_{i=1}K(\Z,|x_i|)\times\prod^n_{j=1}Y_j\ar[ur]_-{\triangle\times h}	&	&
}
\]
where $\imath$ is the inclusion, $\triangle:\prod^m_{i=1}K(\Z,|x_i|)\to\prod^{a_i}_{j=1}X_{ij}$ is the diagonal map, and $\delta'$ is the composite
\[
\delta':\prod_{(i,j)\in\Omega}X_{ij}\times\prod^n_{k=1}Y_k\overset{\text{proj}}{\longrightarrow}\prod_{(i,j)\in\Omega}X_{ij}\overset{\prod^m_{i=1}\delta_i}{\longrightarrow}\prod_{(i,j)\in\bar{\Omega}}X_{ij}.
\]
The middle square is due to the functoriality of polyhedral products, the right square is due to the definition of $\delta$ and the bottom triangle is due to the naturality of diagonal maps.

The composite of maps from $Z_A$ to $\prod_{(i,j)\in\bar{\Omega}}X_{ij}$ round the bottom triangle is trivial, since
\[
\prod^m_{i=1}K(\Z,|x_i|)\times\prod^n_{k=1}Y_k\overset{\triangle\times h}{\longrightarrow}\prod_{(i,j)\in\Omega}X_{ij}\times\prod^n_{k=1}Y_k\overset{\delta'}{\longrightarrow}\prod_{(i,j)\in\bar{\Omega}}X_{ij}
\]
is a fibration sequence. So the composite in the top row is trivial and this induces a map~\mbox{$g_A:Z_A\to X_A$} as asserted.
\end{proof}

Since $g^*_K:H^*((\underline{X}+\underline{Y})^K)\to H^*((\underline{Z}+\underline{Y})^K)$ is multiplicative and $H^*(Z_A)$ is a quotient algebra of $H^*((\underline{Z}+\underline{Y})^K)$, we use $g_A$ to compare $H^*(X_A)$ and $H^*(Z_A)$ and show that $H^*_f(X_A)$ is a quotient algebra of $H^*_f((\underline{X}+\underline{Y})^K)$.

\begin{lemma}\label{lemma_extension}
Let $\phi:X_A\to((\underline{X}+\underline{Y},\ast)^{K})$ be the inclusion. Then the induced morphism
\[
\phi^*_f:H^*_f((\underline{X}+\underline{Y},\ast)^{K})\longrightarrow H^*_f(X_A)
\]
is surjective and $ker(\phi^*_f)$ is generated by $x_{ij}-x_{i1}$ for ${(i,j)\in\bar{\Omega}}$.
\end{lemma}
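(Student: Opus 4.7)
The plan is to factor $\phi^*_f$ through $A$ using the homotopy fiber relation, identify the factored map with an edge isomorphism in the Eilenberg--Moore spectral sequence modulo torsion, and then invoke Lemma~\ref{lemma_H(XS)=S as mod} for the final surjectivity.

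First, since $\phi$ is the homotopy fiber of $\delta$, the composite $\delta\circ\phi$ is nullhomotopic, so $\phi^*\circ\delta^*=0$ and hence $\phi^*_f\circ(\delta^*)_f=0$. Each $\delta_i(u_1,\ldots,u_{a_i})=(u_2u_1^{-1},\ldots,u_{a_i}u_1^{-1})$, combined with the primitivity of the fundamental class of $K(\Z,|x_i|)$ modulo torsion, gives $(\delta^*)_f(w_{ij})=x'_{ij}-x'_{i1}$ (the parallel computation for $\tilde\delta$ appears in the proof of Lemma~\ref{lemma_Z^K cohmlgy}). Hence $\{x'_{ij}-x'_{i1}\mid(i,j)\in\bar{\Omega}\}\subset\ker(\phi^*_f)$, and $\phi^*_f$ descends to a ring map
\[
\bar\phi\colon A=A'/(x'_{ij}-x'_{i1}\mid(i,j)\in\bar{\Omega})\longrightarrow H^*_f(X_A).
\]

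Next, the EMSS edge homomorphism identifies $\mathrm{Im}(\phi^*)$ with the bottom filtration piece $\mathscr{F}^0\subset H^*(X_A)$, so $\mathrm{Im}(\phi^*_f)=(\mathscr{F}^0)_f$. By Lemma~\ref{lemma_E_infty}, $(\mathscr{F}^0)_f\cong(E^{0,*}_\infty)_f\cong A$ as graded abelian groups. The surjection $\bar\phi\colon A\twoheadrightarrow(\mathscr{F}^0)_f$ is therefore a map between graded free abelian groups of equal rank in each degree, hence an isomorphism. In particular, $\ker(\phi^*_f)$ is the ideal generated by $\{x'_{ij}-x'_{i1}\}_{(i,j)\in\bar{\Omega}}$, as required.

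For the surjectivity of $\phi^*_f$, I will invoke Lemma~\ref{lemma_H(XS)=S as mod}, whose EMSS proof identifies $H^*_f(X_A)$ with $(E^{0,*}_\infty)_f=(\mathscr{F}^0)_f$: the filtration successive quotients $(\mathscr{F}^{-p})_f/(\mathscr{F}^{-p+1})_f$ are torsion for $p>0$ by Lemma~\ref{lemma_E_infty}, and iterating Lemma~\ref{lemma_free SES} upward through the filtration together with the torsion-freeness of $H^*_f(X_A)$ forces the inclusion $(\mathscr{F}^0)_f\hookrightarrow H^*_f(X_A)$ to be an equality. Hence $\phi^*_f$ surjects onto $H^*_f(X_A)$.

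The main obstacle is this last surjectivity step: a priori the EMSS collapse modulo torsion at the $p=0$ column only shows that $(\mathscr{F}^0)_f$ and $H^*_f(X_A)$ have the same rank in each degree, which for a subgroup of a torsion-free abelian group is strictly weaker than equality. The essential input here is the precise filtration-collapse identification provided by Lemma~\ref{lemma_H(XS)=S as mod}.
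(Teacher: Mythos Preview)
Your argument correctly identifies its own weak point in the final paragraph but does not close it. Lemma~\ref{lemma_H(XS)=S as mod} only establishes an \emph{abstract} module isomorphism $H^q_f(X_A)\cong A^q$ (both being free of the same finite rank); it does not show that the specific inclusion $(\mathscr{F}^0)_f\hookrightarrow H^q_f(X_A)$ is an equality. The sentence ``iterating Lemma~\ref{lemma_free SES} upward through the filtration together with the torsion-freeness of $H^*_f(X_A)$ forces the inclusion $(\mathscr{F}^0)_f\hookrightarrow H^*_f(X_A)$ to be an equality'' is simply false: take $\mathscr{F}^0=2\Z\subset\Z=\mathscr{F}^{-1}$, both torsion-free with torsion quotient, yet the inclusion is proper. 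So invoking Lemma~\ref{lemma_H(XS)=S as mod} at the end buys you nothing beyond the rank statement you already concede is insufficient.

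The paper resolves exactly this divisibility obstruction via the auxiliary construction of $Z_A$ and the comparison map $g_A:Z_A\to X_A$ (Lemmas~\ref{lemma_Z^K cohmlgy}--\ref{lemma_f_S map}), which your proposal bypasses entirely. Because $H^*(Z_A)$ is torsion-free and explicitly computed, one checks on monomial generators that the composite
\[
(E^{0,q}_{\infty})_f\xrightarrow{\ h_f\ }H^q_f(X_A)\xrightarrow{\ (g^*_A)_f\ }H^q_f(Z_A)
\]
carries a basis of $(E^{0,q}_{\infty})_f$ to part of a basis of $H^q_f(Z_A)$, hence is a \emph{split} monomorphism. A split monomorphism between finitely generated free abelian groups of equal rank is an isomorphism, so $h_f$ is an isomorphism and $\phi^*_f$ is surjective. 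Without this external detection through $g_A$, nothing in your argument rules out $(\mathscr{F}^0)_f$ sitting as a proper finite-index subgroup of $H^q_f(X_A)$. Your containment $(x'_{ij}-x'_{i1})\subset\ker(\phi^*_f)$ and the rank-matching kernel argument are fine; it is only the surjectivity that needs the missing ingredient.
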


\begin{proof}
Fix a positive integer $q$ and let $\psi:Z_A\to(\underline{Z}+\underline{Y},\ast)^{K}$ be the inclusion. Consider commutative diagram
\[
\xymatrix{
	&H^q((\underline{X}+\underline{Y},\ast)^{K})\ar[r]^-{g^*_{K}}\ar[d]^-{\phi^*}\ar[dl]_-{e}	&H^q((\underline{Z}+\underline{Y},\ast)^{K})\ar[d]^-{\psi^*}\\
E^{0,q}_{\infty}\ar[r]^-{h}	&H^q(X_A)\ar[r]^-{g^*_A}	&H^q(Z_A)
}
\]
where $e$ is surjective and $h$ is injective. The left triangle commutes due to Lemma~\ref{lemma_edge homomorphism} and the right square commutes due to Lemma~\ref{lemma_f_S map}. Mod out torsion elements and take a generator
\[
x_{i_1j_1}\cdots x_{i_sj_s}\otimes y_{l_1}\cdots y_{l_t}\in H^q_f((\underline{X}+\underline{Y},\ast)^{K}).
\]
By Lemma~\ref{lemma_g_A send generators} and the above diagram we have
\begin{eqnarray*}
(g^*_A\circ h\circ e)_f(x_{i_1j_1}\cdots x_{i_sj_s}\otimes y_{l_1}\cdots y_{l_t})&=&(\psi^*\circ g^*_{K})_f(x_{i_1j_1}\cdots x_{i_sj_s}\otimes y_{l_1}\cdots y_{l_t})\\
(g^*_A\circ h)_f(x_{i_1}\cdots x_{i_s}\otimes y_{l_1}\cdots y_{l_t})&=&\left(\prod^{s}_{u=1}\prod^{c_{i_u}}_{k=1}z_{i_uj_uk}\right)\otimes y_{l_1}\cdots y_{l_t}.
\end{eqnarray*}
Since $x_{i_1}\cdots x_{i_s}\otimes y_{l_1}\cdots y_{l_t}$ and $\left(\prod^{s}_{u=1}\prod^{c_{i_u}}_{k=1}z_{i_uj_uk}\right)\otimes y_{l_1}\cdots y_{l_t}$ are generators, $(g_A\circ h)^*_f$ is the inclusion of a direct summand into $H^q_f(Z_A)$. By Lemma~\ref{lemma_E_infty} $(E^{0,q}_{\infty})_f$ and $H^q_f(X_A)$ are free modules of same rank, so $h_f$ is an isomorphism. Since $e_f$ is a surjection, so is $\phi^*_f$.

For the second part of the lemma, suppose there is a polynomial $p\in\text{ker}(\phi^*_f)$ not contained in $(x_{ij}-x_{i1})_{(i,j)\in\bar{\Omega}}$. Since $\phi^*_f$ is a degree $0$ morphism, we assume $p=\sum_{\alpha}p_{\alpha}$ is a sum of monomials $p_{\alpha}$ of some fixed degree $q$. Then $p_{\alpha}$'s are linear dependent. So the rank of $H^{q}_f(X_A)$ is less than the rank of $A^{q}$, contradicting to Lemma~\ref{lemma_E_infty}. So $\text{ker}(\phi^*_f)=(x_{ij}-x_{i1})_{(i,j)\in\bar{\Omega}}$.
\end{proof}

Next we restate the Main Theorem (Theorem~\ref{thm_main thm}) and prove it.

\begin{thm}\label{thm_main new}
Let $A$ be a graded monomial ideal ring. Then there exists a space $X_A$ such that $H^*_f(X_A)$ is ring isomorphic to $A$. Moreover, $H^*(X_A)$ is free split.
\end{thm}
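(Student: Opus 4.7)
The plan is to promote the module isomorphism of Lemma~\ref{lemma_H(XS)=S as mod} to a ring isomorphism using Lemma~\ref{lemma_extension}, and then to pull back the free splitting of $H^*((\underline{X}+\underline{Y},\ast)^{K})$ from Lemma~\ref{lemma_free split} through $\phi^*$ in order to produce a free splitting of $H^*(X_A)$.

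Since $\phi^*\colon H^*((\underline{X}+\underline{Y},\ast)^{K})\to H^*(X_A)$ is a ring map, reducing modulo torsion gives a surjective ring homomorphism $\phi^*_f\colon A'\to H^*_f(X_A)$. By Lemma~\ref{lemma_extension} the kernel of $\phi^*_f$ is the ideal $(x'_{ij}-x'_{i1})_{(i,j)\in\bar\Omega}$, and by construction of the polarization $A\cong A'/\bar W A'$ is exactly this quotient. Hence $\phi^*_f$ descends to the required ring isomorphism $A\cong H^*_f(X_A)$.

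For the free-splitting half, I would use the specific free splitting $s\colon A'\to H^*((\underline{X}+\underline{Y},\ast)^{K})$ assembled in Lemma~\ref{lemma_free split} from the canonical free splittings $s_i\colon \Z[x_i]\to H^*(K(\Z,|x_i|))$ (sending $x_i$ to a chosen integral generator $\tilde x_i$) together with the identity on each sphere factor. Then $\phi^*\circ s\colon A'\to H^*(X_A)$ is a ring map, and the crux is to check that it annihilates every generator $x'_{ij}-x'_{i1}$. Because each $X_{ij}=K(\Z,|x_i|)$ is an abelian $H$-space, the identity $\delta_i^*(w_{ij})=\pi_j^*(\tilde x_i)-\pi_1^*(\tilde x_i)$ holds exactly in $H^*(\prod_j X_{ij})$; restricting along the polyhedral-product inclusion gives $\delta^*(w_{ij})=s(x'_{ij})-s(x'_{i1})$ in $H^*((\underline{X}+\underline{Y},\ast)^{K})$. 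Since $\delta\circ\phi$ is null-homotopic, $\phi^*\circ\delta^*$ vanishes on reduced classes, so $\phi^*(s(x'_{ij})-s(x'_{i1}))=0$. Consequently $\phi^*\circ s$ factors through $A'/(x'_{ij}-x'_{i1})=A$, yielding a ring map $\bar s\colon A\to H^*(X_A)$; composing $\bar s$ with the torsion quotient recovers the identity on $A$, so $\bar s$ is a free splitting.

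The hard part is ensuring $\delta^*(w_{ij})=s(x'_{ij})-s(x'_{i1})$ holds on the nose rather than merely up to torsion. With an arbitrary free splitting of $H^*((\underline{X}+\underline{Y},\ast)^{K})$ one only gets $\phi^*(s(x'_{ij})-s(x'_{i1}))$ being torsion in $H^*(X_A)$, which is insufficient for $\phi^*\circ s$ to descend to a ring map on $A$. Choosing the canonical Eilenberg-MacLane splittings from Lemma~\ref{lemma_square free realization} is precisely what makes the additive $H$-space identity exact and allows the descent.
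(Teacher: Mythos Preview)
Your proposal is correct and follows essentially the same route as the paper: both parts invoke Lemma~\ref{lemma_extension} for the ring isomorphism, and both build the free splitting by pushing the canonical splitting $s_K$ of Lemma~\ref{lemma_square free realization} down through $\phi^*$, using the fibration identity $\phi^*\circ\delta^*=0$ together with the exact equality $\delta^*(w_{ij})=s_K(x'_{ij})-s_K(x'_{i1})$ coming from the $H$-space structure of $K(\Z,|x_i|)$. Your closing remark that an arbitrary free splitting would only give the vanishing modulo torsion, and that the canonical Eilenberg--MacLane splitting is what makes the descent work, is a point the paper leaves implicit.
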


\begin{proof}
For the first part of the lemma, the ring isomorphism $H^*_f(X_A)\cong A$ follows from Lemma~\ref{lemma_extension}.

In Lemma~\ref{lemma_square free realization} we construct a free splitting $s_{K}:H^*_f(\underline{X}+\underline{Y},\ast)^{K}\to H^*(\underline{X}+\underline{Y},\ast)^{K}$ out of free splittings $s_{ij}:H^*_f(X_{ij})\to H^*(X_{ij})$ and the identity maps on $H^*(Y_k)$. Define a map~$s:H^*_f(X_A)\to H^*(X_A)$ by diagram
\[
\xymatrix{
H^*_f((\underline{X}+\underline{Y},\ast)^{K})\ar[r]^-{s_{K}}\ar[d]^-{\phi^*_f}	&H^*((\underline{X}+\underline{Y},\ast)^{K})\ar[d]^-{\phi^*}\\
H^*_f(X_A)\ar[r]^-{s}	&H^*(X_A)
}
\]
We need to show that $s$ is well defined. By Lemma~\ref{lemma_extension} $\phi^*_f$ is a surjection and $\text{ker}(\phi^*_f)$ is generated by polynomials $x_{ij}-x_{i1}$ for $(i,j)\in\bar{\Omega}$. It suffices to show $\phi^*\circ s_{K}(x_{ij}-x_{i1})=0$. Let $\tilde{x}_{ij}\in H^{2c_i}(X_{ij})$ and $\tilde{x}'_{ij}\in H^{2c_i}_f(X_{ij})$ be generators such that $s_{ij}(\tilde{x}'_{ij})=\tilde{x}_{ij}$. There is a string of equations
\begin{eqnarray*}
\phi^*\circ s_{K}(x_{ij}-x_{i1})
&=&\phi^*\circ\mu(s_{ij}(\tilde{x}'_{ij})-s_{i1}(\tilde{x}'_{i1}))\\
&=&\phi^*\circ\mu(\tilde{x}_{ij}-\tilde{x}_{i1})\\
&=&\phi^*\circ\delta^*\circ\mu(1\otimes\cdots\otimes \tilde{x}_{ij}\otimes\cdots\otimes1)\\
&=&0
\end{eqnarray*}
where the first line is due to the definition of $s_K$, the third line is due to the naturality of~$\mu$, and the last line is due to the fact that $\delta$ and $\phi$ are two consecutive maps in the fibration sequence $X_A\overset{\phi}{\to}(\underline{X}+\underline{Y},\ast)^K\overset{\delta}{\to}\prod_{(i,j)\in\bar{\Omega}}X_{ij}$. So $s$ is well defined.

Obviously $s$ is right inverse to the quotient map $H^*(X_A)\to H^*_f(X_A)$. Since $\phi^*_f$, $\phi^*$ and $s_{K}$ are multiplicative, so is $s$. Therefore $s$ is a free splitting.
\end{proof}

\section{An Example}

Now we illustrate how to construct $X_A$ for $A=\Z[x]\otimes\Lambda[y]/(x^2y)$, where $|x|=4$ and~$|y|=1$. First, polarize $A$ by introducing two new variables $x_{1}$ and $x_{2}$ of degree 4 and let
\[
A'=\Z[x_{1},x_{2}]\otimes\Lambda[y]/(x_{1}x_{2}y).
\]
Let $K$ be the boundary of a 2-simplex. Then $(x_{1}x_{2}y)$ is the Stanley-Reisner ideal of $K$. Take
\[
\begin{array}{c c}
\underline{X}=\{K(\Z,4),K(\Z,4)\},
&\underline{Y}=\{S^1\}
\end{array}
\]
and construct polyhedral product $(\underline{X}+\underline{Y},\ast)^{K}$. By Proposition~\ref{lemma_free SR ring}
\[
H^*_f((\underline{X}+\underline{Y},\ast)^{K})\cong\Z[x_{1},x_{2}]\otimes\Lambda[y]/(x_{1}x_{2}y).
\]
Define $\delta:(\underline{X}+\underline{Y},\ast)^K\to K(\Z,4)$ by $\delta_1(u_1,u_2,t)=u_2\cdot u_1^{-1}$, and define $X_A$ to be the fiber of $\delta$. By Theorem~\ref{thm_main new} $H^*_f(X_A)\cong A$.

Next, we construct $Z_A$ and $g_A$ to illustrate the proof of the extension problem. In this case, take $\underline{Z}=\{(\C\PP^{\infty})^2,(\C\PP^{\infty})^2\}$. Denote the first $(\C\PP^{\infty})^2$ by $Z_{1}$ and the second $(\C\PP^{\infty})^2$ by $Z_{2}$. Then $H^*(Z_{1})=\Z[z_{11},z_{12}]$ and $H^*(Z_{2})=\Z[z_{21},z_{22}]$, where $|z_{ij}|=2$ for $i,j\in\{1,2\}$, and
\[
H^*((\underline{Z}+\underline{Y},\ast)^K)\cong\Z[z_{11},z_{12},z_{21},z_{22}]\otimes\Lambda[y]/L'
\]
where $L'=(z_{11}z_{21}y, z_{11}z_{22}y, z_{12}z_{21}y, z_{12}z_{22}y)$. Define
\[
\tilde{\delta}:(\underline{Z}+\underline{Y},\ast)^K\to(\C\PP^{\infty})^2,\quad\tilde{\delta}(v_1,v_2,t)=v_2\cdot v_1^{-1},
\]
and define $Z_A$ to be the fiber of $\tilde{\delta}$. Then $H^*_f(Z_A)\cong\Z[z_{1},z_{2}]\otimes\Lambda[y]/L$, where $|z_{1}|=|z_{2}|=2$ and $L=(z^2_{1}y, z^2_{2}y, z_{1}z_{2}y)$.

For $i=\{1,2\}$, let $g_{i}:Z_{i}\to K(\Z,4)$ be a map representing $z_{i1}z_{i2}\in H^4(Z_{i})$, and let~\mbox{$h:S^1\to S^1$} be the identity map. Then $g_{1},g_{2}$ and $h$ induce $g_{K}:(\underline{Z}+\underline{Y},\ast)^{K}\to (\underline{X}+\underline{Y},\ast)^{K}$ such that $g^*_{K}(x_{i})=z_{i1}z_{i2}$ and $g^*_{K}(y)=y$. Lemma~\ref{lemma_f_S map} gives a map $g_A:Z_A\to X_A$ making the following diagram commute
\[
\xymatrix{
Z_A\ar[r]\ar[d]^-{g_A}	&(\underline{Z}+\underline{Y},\ast)^{K}\ar[d]^-{g_{K}}\\
X_A\ar[r]	&(\underline{X}+\underline{Y},\ast)^{K}.
}
\]

\section*{acknowledgment}
This work was done during the first author's PIMS postdoctoral Fellowship. Both authors gratefully acknowledge the support of Fields Institute and NSERC. The first author also thanks Hector Durham for looking for references.

\end{document}